\documentclass{article}

\usepackage{amsmath, amsfonts, amsthm, amssymb}
\usepackage{mathtools}

\usepackage{microtype}

\let\OLDthebibliography\thebibliography
\renewcommand\thebibliography[1]{
	\OLDthebibliography{#1}
	\setlength{\parskip}{1pt}
	\setlength{\itemsep}{1pt plus 0.3ex}
}

\def\Z{\mathbb{Z}}
\def\R{\mathbb{R}}
\def\F{\mathbb{F}}

\newtheorem{theorem}{Theorem}[section]
\newtheorem{lemma}[theorem]{Lemma}
\newtheorem{proposition}[theorem]{Proposition}
\newtheorem{corollary}[theorem]{Corollary}

\theoremstyle{definition}
\newtheorem{problem}[theorem]{Problem}
\newtheorem{definition}[theorem]{Definition}
\newtheorem{remark}[theorem]{Remark}

\def\int{\operatorname{Int}}
\def\es{\operatorname{S}}
\def\de{\operatorname{D}}
\def\reeb#1{\mathcal{R}(#1)}
\def\ind{\operatorname{ind}}
\def\degin{\deg_{in}}
\def\degout{\deg_{out}}

\usepackage{tikz}

\makeatletter
\def\@addpunct#1{%
	\relax\ifhmode
	\ifnum\spacefactor>\@m \else#1\fi
	\fi}
\newcommand{\keywordsname}{$2010$ Mathematics Subject Classification}
\def\@setkeywords{%
	{\itshape \keywordsname.}\enspace \@keywords\@addpunct.}
\def\keywords#1{\def\@keywords{#1}}
\let\@keywords=\@empty
\g@addto@macro{\maketitle}{\begingroup%
	\let\@makefnmark\relax  \let\@thefnmark\relax%
	\ifx\@keywords\@mpty\else\@footnotetext{\@setkeywords}\fi%
	\endgroup}
\makeatletter

\keywords{Primary: 58K05; Secondary: 57M15, 58K65.  \\
	\indent\indent{\itshape Key words and phrases}. Reeb graph; critical point; Morse function.\\\indent\indent The author was supported by the Polish Research Grant NCN 2015/19/B/ST1/01458}



\newcommand{\address}{{ \bigskip

\footnotesize
	{\noindent\textsc{\L{}ukasz Patryk Michalak}\\
		Adam Mickiewicz University in Pozna\'n\\
		Faculty of Mathematics and Computer Science\\
		ul. Umultowska 87, 61-614 Pozna\'n, Poland} \\
		\textit{E-mail address:} \texttt{lukasz.michalak@amu.edu.pl}
		
}}






\date{}

\title{Realization of a~graph as~the~Reeb graph of~a~Morse~function on~a~manifold}

\author{\L{}ukasz Patryk Michalak
}

\begin{document}

	\maketitle
	\begin{abstract}
		We investigate the problem of the realization of a given graph as~the Reeb graph $\reeb{f}$ of a smooth function $f\colon M\rightarrow \R$ with finitely many critical points, where $M$ is a closed manifold. We show that for any $n\geq2$ and any graph $\Gamma$ admitting the so-called good orientation there exist an $n$-manifold~$M$ and a Morse function $f\colon M\rightarrow \R $ such that its Reeb graph $\reeb{f}$ is isomorphic to $\Gamma$, extending previous results of Sharko and Masumoto--Saeki. We prove that Reeb graphs of simple Morse functions maximize the number of cycles. Furthermore, we provide a complete characterization of graphs which can arise as Reeb graphs of surfaces.
	\end{abstract}

	\section{Introduction} 		     
	
	The Reeb graph, denoted by $\reeb{f}$, has been introduced by Reeb \cite{Reeb}. It is defined for a closed manifold~$M$ and a~smooth function $f\colon M\rightarrow \R$ with finitely many critical points by contracting the connected components of level sets of~$f$. Reeb graphs are widely applied in computer graphics and shape analysis (see~\cite{applications}).

	Sharko \cite{Sharko} and Masumoto--Saeki \cite{Saeki} proved that every graph which admits a good orientation (see Definition \ref{def:good_orientation}) can be realized as the Reeb graph of a~function with finitely many critical points on a surface. This leads to a natural problem:
	
	\begin{problem}\label{main_problem} For a given manifold $M$, which graph $\Gamma$ can be realized as the Reeb graph of a function $f\colon M \rightarrow \R$ with finitely many critical points?
	\end{problem}
	
	We give a complete answer to this problem for surfaces (Theorem \ref{thm:reeb_graphs_of_surfaces}). Let $\Gamma_0$ be the connected graph with two vertices and one edge. A graph $\Gamma \neq \Gamma_0$ can be realized as the Reeb graph of a function with finitely many critical points	if and only if it admits a good orientation and the number of cycles in $\Gamma$ (i.e.~its first Betti number) is not greater than the maximal attainable. Therefore the realizability of an oriented graph depends only on its homotopy type and whether its orientation is a good orientation.
	Next we describe graphs which can be realized by Morse functions (Theorem \ref{theorem:realization_by_Morse_function}) on a given surface. It depends additionally on the number of vertices of degree $2$ in a graph.
	We also extend the theorem of Sharko and Masumoto--Saeki (Theorem \ref{thm:graph_as_reeb_graph}) by constructing, for any given $n\geq 2$, an $n$-manifold and a Morse function such that its Reeb graph is isomorphic to a given graph $\Gamma$ with good orientation.
	The proofs of these theorems uses Morse theory and handle decomposition of manifolds.
	
	In the construction we need the information on $\reeb{M}$, which is  defined as the maximal number of cycles in Reeb graphs of functions with finitely many critical points on a manifold $M$. We call it the Reeb number of $M$. There are a~few ways to calculate this number for surfaces. One can do it using techniques of Kaluba--Marzantowicz--Silva  \cite{KMS} for orientable surfaces or using the notion of co-rank of the fundamental group (Gelbukh \cite{Gelbukh:Reeb_graph,Gelbukh:corank}). Our calculation relies on results of Cole-McLaughlin et al. \cite{Edelsbrunner} describing the number of cycles in the Reeb graph of a simple Morse function on a surface. We show that the Reeb number is attained by simple Morse functions (Lemma~\ref{lemma:maximalization_by_simple_Morse_functions}) what implies that $\reeb{\Sigma} = \left\lfloor \frac{k}{2} \right\rfloor$ for a closed surface $\Sigma$ of the Euler characteristic $\chi(\Sigma) = 2-k$.
	
	The paper is organized as follows. In Section \ref{section:basic_properties_of_Reeb_graphs} we recall basic properties of Reeb graphs. Section \ref{section:number_of_cycles_in_Reeb_graph} is devoted to the computation of the Reeb number of surfaces. We give a simple proof of lemmas from \cite{Edelsbrunner} and provide a proof that simple Morse functions maximize the number of cycles in Reeb graphs. In~Section \ref{section:Realization_by_Morse_function} we prove an analogue to the Sharko and Masumoto--Saeki theorem in
	arbitrary dimension. Finally, in Section \ref{section:main_problem_for_surfaces} we resolve Problem \ref{main_problem} for surfaces.
	
	\section{Basic properties of Reeb graphs}\label{section:basic_properties_of_Reeb_graphs}
	
	Throughout the paper we assume that all manifolds are compact, smooth, connected of
	dimension $n \geq 2$ and that all graphs are finite and connected.
	
	\begin{definition}
		Let $X$ be a topological space and $f\colon X\rightarrow \R$ be a function. We say that $x$ and $y$ are in \textbf{Reeb relation} $\sim_{\mathcal{R}}$ if and only if $x$ and $y$ belong to the same connected component of a level set of $f$. The quotient space $X/_{\sim_{\mathcal{R}}}$ is denoted by $\reeb{f}$. 
	\end{definition}
	
	A smooth triad is a tripple $(W,W_-,W_+)$, where $W$ is a compact manifold with boundary $\partial W = W_- \sqcup W_+$, the disjoint union of $W_-$ and $W_+$ (we allow $W_\pm = \varnothing$). A smooth function on a triad  $(W,W_-,W_+)$ is a smooth function $f\colon W \rightarrow [a,b]$ such that $f^{-1}(a) = W_-$, $f^{-1}(b) = W_+$ and  $f$ has all critical points in $\int W$, the interior of $W$.
	
	It is known that for a smooth function $f$ with finitely many critical points on a smooth triad $(W,W_-,W_+)$, the quotient space $\reeb{f}$ is homeomorphic to a~finite graph, i.e. to~a~one-dimensional finite CW-complex (see \cite{Reeb}, \cite{Sharko}), known as the \textbf{Reeb graph}. The vertices of $\reeb{f}$ correspond to the critical components of the level sets of $f$ (i.e. to the components containing critical points) and to the components of $W_\pm$. We consider the combinatorial structure on $\reeb{f}$ induced by the homeomorphism as the standard one.

	Every function $f$ induces the continuous function $\overline{f}\colon \reeb{f} \rightarrow \R$ which satisfies $f = \overline{f} \circ q$, where $q\colon W \rightarrow W/_{\sim_\mathcal{R}} = \reeb{f}$ is the quotient map. The function $\overline{f}$ gives an orientation to the edges of $\reeb{f}$ as described by Sharko \cite{Sharko} and is called the good orientation of a graph by Masumoto--Saeki~\cite{Saeki}.
	
	For a~vertex $v$ in an oriented finite graph $\Gamma$ let $\degin(v)$ ($\degout(v)$) denote the number of incoming (outcoming) edges to the vertex $v$.	The \textbf{degree} of $v$ is defined as $\deg(v) = \degin(v) + \degout(v)$.
	
	
	\begin{definition}[\cite{Sharko}]\label{def:good_orientation}
		A \textbf{good orientation} of a graph $\Gamma$ is the orientation induced by a continuous function $g\colon \Gamma \rightarrow \R$ such that $g$ is strictly monotonic on the edges and has extrema in the vertices of degree~one.
	\end{definition}

	Note that not all graphs admit a good orientation (see example \cite[Figure~1]{Sharko}). It is easy to see that the orientation on $\reeb{f}$ induced by $\overline{f}$ satisfies the condition in the definition.
	
	We say that an oriented graph $\Gamma$ \textbf{can be realized} as a Reeb graph for a~closed manifold $M$ if there exists a function $f$ on $M$ with finitely many critical points such that $\reeb{f}$ is isomorphic to $\Gamma$ as an oriented graph. Therefore a good orientability is the necessary condition for $\Gamma$ in Problem~\ref{main_problem}.
	
	Throughout the paper isomorphism of graphs denotes isomorphism of oriented graphs, where applicable.
	
	\section{Number of cycles in Reeb graphs}\label{section:number_of_cycles_in_Reeb_graph}
	
	Recall that for a CW-complex $X$ the first Betti number $\beta_1(X)$ is the rank over $\Z$ of the first homology group, i.e. $\beta_1(X) := \operatorname{rank}_\Z \operatorname{H}_1(X;\Z)$. For a finite graph $\Gamma$ its first Betti number $\beta_1(\Gamma)$ is called the \textbf{number of cycles} in $\Gamma$.

	We write $\Gamma = \Gamma(V,E)$ when $V$ is the set of vertices and $E$ is the set of edges of a graph $\Gamma$. Then the number of cycles in the graph $\Gamma$ is equal to $\beta_1(\Gamma)= |E|-|V|+1$.
	
	\begin{definition}\label{def:reeb_number}
		The \textbf{Reeb number} $\reeb{M}$ of a manifold $M$ is the maximal number of cycles among all Reeb graphs of smooth functions on $M$ with finitely many critical points, i.e.
		\[
			\reeb{M} :=\max \left\{\, \beta_1(\reeb{f})\, |\, f\colon M \rightarrow \R \text{ has finitely many critical points}\, \right\}\!.
		\]
	\end{definition}
	
	From \cite{Edelsbrunner} and \cite{KMS} we know that $\beta_1(\reeb{f}) \leq \beta_1(M)$ for every smooth function $f\colon M \rightarrow \R$ with finitely many critical points, hence $\reeb{M}$ is well-defined and $\reeb{M} \leq \beta_1(M)$.
	
	A smooth function $f$ on a triad $(W,W_-,W_+)$ is a Morse function if all its critical points are non-degenerate. A Morse function is \textbf{simple} if each critical value corresponds exactly to one critical point. For such a function the number of vertices in its Reeb graph is the same as the number of critical points.
	
	Let $\Sigma_g := \#_{i=1}^g T^2$ denote the closed orientable surface of genus $g$ which is the connected sum of $g$ copies of the two-dimensional torus $T^2$ and let $S_g := \#_{i=1}^g \R P^2$ denote the closed non-orientable surface of genus $g$, the connected sum of $g$ copies of the real projective plane $\R P^2$.
	
	Cole-McLaughlin, Edelsbrunner et al. \cite{Edelsbrunner} established the following lemma.
	
	\begin{lemma}[{\cite[Lemma A and C]{Edelsbrunner}}]\label{lemma:cycles_for_simple_Morse_function_on_surface}
		
		Let $f\colon \Sigma \rightarrow \R$ be a simple Morse function on a closed surface $\Sigma$.
		\begin{enumerate}
			\item If $\Sigma = \Sigma_g$, then $\beta_1(\reeb{f}) = g$.
			\item If $\Sigma = S_g$, then $\beta_1(\reeb{f}) \leq \left\lfloor \frac{g}{2} \right\rfloor$, where $\left\lfloor x\right\rfloor$ is the floor of $x$.
		\end{enumerate}
	\end{lemma}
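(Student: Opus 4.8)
The plan is to read $\beta_1(\reeb{f})$ directly off the combinatorics of the Reeb graph, using only the local normal forms of $f$ at its critical points together with $\chi(\Sigma)$. Write $c_0,c_1,c_2$ for the numbers of minima, saddles and maxima of $f$. Since $f$ is simple, every critical level component contains exactly one critical point and distinct critical points lie in distinct components, so $\reeb{f}$ has exactly $c_0+c_1+c_2$ vertices. Inspecting the models $f=\pm|x|^2$ and $f=xy$ (plus a constant) one checks that a minimum and a maximum each give a vertex of degree $1$, while a saddle $p$ gives a vertex of degree $2$ or $3$: the component of $f^{-1}(f(p))$ through $p$ is a one-point union of two circles, and on the two sides it is approached either by one circle and one circle (a degree-$2$ vertex) or by one circle and two circles (a degree-$3$ vertex) --- never by two circles on both sides. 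Let $s_3\le c_1$ count the saddles of the second kind. Then $\sum_v\deg v=(c_0+c_2)+2(c_1-s_3)+3s_3$, whence $|E|=\tfrac12\bigl((c_0+c_2)+2c_1+s_3\bigr)$, and using $\chi(\Sigma)=c_0-c_1+c_2$,
\[
\beta_1(\reeb{f})=|E|-|V|+1=1-\frac{\chi(\Sigma)}{2}-\frac{c_1-s_3}{2}\ \le\ 1-\frac{\chi(\Sigma)}{2}.
\]

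Item (2) follows at once: for $\Sigma=S_g$ we have $\chi=2-g$, so $\beta_1(\reeb{f})\le g/2$, and since $\beta_1(\reeb{f})$ is an integer, $\beta_1(\reeb{f})\le\lfloor g/2\rfloor$. For item (1), $\Sigma=\Sigma_g$ has $\chi=2-2g$, so the displayed inequality gives $\beta_1(\reeb{f})\le g$, and the displayed equality shows it remains only to prove that $s_3=c_1$, i.e. that a simple Morse function on an orientable surface has no degree-$2$ saddle. Suppose $p$ were one, with value $t$, and choose $\varepsilon>0$ so small that $[t-\varepsilon,t+\varepsilon]$ contains no other critical value. Let $P$ be the connected component of $f^{-1}([t-\varepsilon,t+\varepsilon])$ that contains $p$. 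By the local description, $P\cap f^{-1}(t-\varepsilon)$ and $P\cap f^{-1}(t+\varepsilon)$ are each a single circle, so $\bigl(P,\,P\cap f^{-1}(t-\varepsilon),\,P\cap f^{-1}(t+\varepsilon)\bigr)$ is a smooth triad on which $f|_P$ is a Morse function with a unique critical point, of index $1$; therefore $\chi(P)=\chi(P\cap f^{-1}(t-\varepsilon))-1=0-1=-1$. But a compact orientable surface with two boundary circles has even Euler characteristic, so $P$, and hence $\Sigma$, would be non-orientable --- a contradiction. Thus $s_3=c_1$ when $\Sigma=\Sigma_g$, and the formula yields $\beta_1(\reeb{f})=g$.

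The delicate point is the local analysis at a saddle invoked above: that the singular level component is a figure eight, that resolving it never gives two circles on both sides, and that in the degree-$2$ case $P$ meets $f^{-1}(t\pm\varepsilon)$ in single circles. This is elementary --- it comes down to the model $f=xy$ together with the matching of the four local branch-ends by the arcs of $f^{-1}(t)$ lying outside a small disc around $p$, the three possible matchings producing respectively a split, a merge, and the degree-$2$ reconnection --- but it is the fussy step and the one I would write out with care. As an alternative, less self-contained route to the reverse inequality in (1): the quotient map $q\colon\Sigma_g\to\reeb{f}$ is onto on $H_1(\,\cdot\,;\R)$ with kernel spanned by the classes of level-set components; these are pairwise disjoint, so they span an isotropic subspace for the non-degenerate skew-symmetric intersection form on $H_1(\Sigma_g;\R)\cong\R^{2g}$, which has dimension at most $g$, giving $\beta_1(\reeb{f})=2g-\dim\ker q_*\ge g$. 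I would nevertheless keep the Euler-characteristic argument as the main line, since it also disposes of (2) and stays within the Morse-theoretic, handle-decomposition setting used elsewhere in the paper.
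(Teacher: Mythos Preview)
Your proof is correct and follows essentially the same line as the paper's: count vertices by critical points, determine the possible degrees at a saddle from the local Morse model (the paper packages this as Proposition~\ref{proposition:degree_and_index}), and combine the handshaking lemma with $\chi(\Sigma)=c_0-c_1+c_2$ to get $2\beta_1(\reeb{f})=2-\chi(\Sigma)-(c_1-s_3)$. The one place you diverge is in ruling out degree-$2$ saddles on an orientable surface: the paper argues directly via handle attachment that a $1$-handle glued along a trivial embedding into an oriented boundary must be attached compatibly with the orientation and hence splits one circle into two, while you instead compute $\chi(P)=-1$ and invoke the parity of the Euler characteristic of an orientable surface with two boundary circles --- an argument the paper itself deploys later, in the proof of Theorem~\ref{theorem:realization_by_Morse_function}. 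Your alternative route to $\beta_1(\reeb{f})\ge g$ via the intersection form (level components spanning an isotropic subspace of $H_1(\Sigma_g;\R)$) does not appear in the paper and is a nice, genuinely different way to get the reverse inequality without the local analysis.
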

	
	Their proof in the orientable case follows by collapsing vertices of degree 1 in $\reeb{f}$, merging arcs across vertices of degree $2$ and a simple calculation.
	In the non-orientable case the composition of a particular orientable $2$-sheeted covering map and $f$ is perturbed to obtain a simpe Morse function. They state that it does not alter the Reeb graph. While the argument may be correct in their situation, it is worth pointing out that it is not the case for perturbations of arbitrary Morse functions.
	
	Consider a Morse function $f \colon \Sigma_g \rightarrow \R$ for $g \geq 1$ which is  \textbf{self-indexing}, i.e.~for every critical point $p$ we have $f(p) = \ind(p)$, where $\ind(p)$ is the index of critical point $p$ (see \cite[Alternate version of 4.8.]{Milnor}). The function has only three critical levels 0, 1 and 2 and all the vertices of $\reeb{f}$ are on these levels. Since the vertices on levels 0 and 2 have degree 1 and the edges join vertices from different levels, the Reeb graph $\reeb{f}$ is a tree. However, by Lemma \ref{lemma:cycles_for_simple_Morse_function_on_surface}, any perturbation of $f$ which is a simple Morse function has in its Reeb graph exactly $g \geq 1$ cycles.

	We give another elementary proof of Lemma \ref{lemma:cycles_for_simple_Morse_function_on_surface}. We need the following fact which is also used in \cite{Edelsbrunner}.
	
	\begin{proposition}\label{proposition:degree_and_index}
		Let $f \colon \Sigma \rightarrow \R$ be a simple Morse function on a closed surface. Let $p$ be a critical point of the function and $v := q(p)$ be the vertex of $\reeb{f}$ corresponding to $p$. Then $\ind(p) = 0$ or~$2$ if and only if $\deg(v)=1$; $\ind(p) =1$ if and only if
		\begin{eqnarray*}
			\deg (v) =  \begin{cases}
				3	        &\text{\ \ \ \ if $\Sigma$ is orientable,}\\
				2\text{ or }3	        &\text{\ \ \ \ if $\Sigma$ is non-orientable.}
			\end{cases}
		\end{eqnarray*}
	\end{proposition}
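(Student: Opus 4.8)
The plan is to analyze the local structure of a level set near a critical point and track what happens in the Reeb graph. Fix a simple Morse function $f\colon \Sigma \rightarrow \R$, a critical point $p$ with critical value $c = f(p)$, and let $v = q(p)$. Since $f$ is simple, $p$ is the only critical point on the level $c$, and the connected component $K$ of $f^{-1}(c)$ containing $p$ contains no other critical point. The edges of $\reeb{f}$ incident to $v$ correspond to the components of $f^{-1}(c-\varepsilon)$ and $f^{-1}(c+\varepsilon)$ (for small $\varepsilon>0$) that limit onto $K$; incoming edges come from below, outgoing from above. So $\deg(v)$ equals the number of such components, and the question is purely about how the Morse model changes the number of boundary components of a regular neighbourhood of $K$ in $\Sigma$.

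First I would dispose of the index $0$ and $2$ cases: if $\ind(p)=0$ then near $p$ the function has a local minimum, $K=\{p\}$ is an isolated point of the level set, $f^{-1}(c-\varepsilon)\cap U=\varnothing$ and $f^{-1}(c+\varepsilon)\cap U$ is a single circle, so $\degin(v)=0$, $\degout(v)=1$ and $\deg(v)=1$; the index $2$ case is symmetric. Conversely, if $\ind(p)=1$ the Morse lemma gives coordinates in which $f = c - x^2 + y^2$, so $K$ near $p$ is two arcs crossing transversally (a wedge of the form $\{xy\text{-type}\}$), and a regular neighbourhood of $K$ in $\Sigma$ is obtained from a regular neighbourhood of the rest of $K$ (a union of arcs and circles away from $p$, hence contributing boundary circles in pairs or singly) by attaching a $1$-handle along the saddle. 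The local picture of $f^{-1}(c\pm\varepsilon)$ near $p$ is two arcs; globally $K$ is a graph with one vertex of valence $4$ (namely $p$) and otherwise a $1$-manifold, so $K$ is homotopy equivalent to a wedge of at most one circle, i.e. $K$ is either an arc-like tree (contractible) or contains exactly one loop. This dichotomy is the heart of the matter: if $K$ is contractible then a regular neighbourhood is a disk and $f^{-1}(c-\varepsilon)$ near $K$ is a single circle while $f^{-1}(c+\varepsilon)$ is a single circle as well after the handle attachment merges/splits — one checks $\deg(v)=3$ always (one side has one component, the other has two, since attaching a band to a circle either splits it in two or, if the band is attached to a single circle on the same side, produces one circle — but the orientation data forbids the degenerate count, forcing $3$). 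If $K$ contains a loop, that loop is a circle in $f^{-1}(c)$; its regular neighbourhood is an annulus or a Möbius band depending on whether the normal bundle is trivial, and this can only be a Möbius band when $\Sigma$ is non-orientable, in which case the handle attachment can yield $\deg(v)=2$.

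The key steps, in order: (1) reduce $\deg(v)$ to counting boundary components of a regular neighbourhood $N$ of the critical component $K$, using that all other components of nearby level sets contribute degree away from $v$; (2) handle the extrema $\ind(p)\in\{0,2\}$ directly via the standard local model, giving $\deg(v)=1$, and conversely note $\ind(p)=1 \Rightarrow \deg(v)\ge 2$ since a saddle always has at least two local branches below and above; (3) for $\ind(p)=1$, describe $K$ as a $4$-valent graph and show it is homotopy equivalent to a point or to a circle; (4) in the contractible case, show $N$ is a disk or a pair of pants and conclude $\deg(v)=3$ irrespective of orientability, using that a band sum on a circle cannot create the "one in, one out" pattern compatible with a \emph{good} orientation unless one counts multiplicities — here I would lean on Proposition-style bookkeeping $\degin(v)+\degout(v)=\deg(v)$ together with the fact that $N$ deformation retracts to $K\simeq\mathrm{pt}$, so $\chi(N)=1$ and an Euler-characteristic count of $\partial N$ pins down three boundary circles; (5) in the loop case, show $N$ is an annulus (orientable subcase, $\deg(v)=3$, since the band adds a third boundary circle or the retraction argument again gives $\chi(N)=0$ with three boundary arcs after the saddle) or a Möbius band (non-orientable only, $\chi(N)=0$ with \emph{one} boundary circle plus the two branches of the saddle giving $\deg(v)=2$); (6) assemble: $\ind(p)=1$ forces $\deg(v)=3$ in the orientable case, and $\deg(v)\in\{2,3\}$ in the non-orientable case, with $2$ attained precisely through a Möbius-band neighbourhood.

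The main obstacle I anticipate is step (4)–(5): carefully distinguishing, from the combinatorics of the $4$-valent vertex $p$ together with the co-orientation of $K$ relative to $f$ (the two "below" branches and two "above" branches at $p$), exactly which band-sum topology occurs, and ruling out a regular neighbourhood with only two boundary circles in the orientable case. The clean way to control this is the Euler-characteristic identity $\chi(N) = 2 - \#\{\text{boundary circles}\} - 2\cdot(\text{number of crosscaps})$ for a compact surface with boundary, combined with $\chi(N)=\chi(K)$ and the observation that $K$ (a $4$-valent graph that is a point-or-circle up to homotopy, with a single actual $4$-valent vertex) has $\chi(K)\in\{1,-1\}$ — and then matching the handle-attachment model $f=c-x^2+y^2$ to see which branches are identified, which is exactly the split/merge dichotomy for saddles. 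Once the possible neighbourhood types are enumerated, the degree counts are immediate and the orientability constraint (no Möbius band in orientable $\Sigma$) gives the stated case distinction.
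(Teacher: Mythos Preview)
Your proposal contains a genuine error in the topology of the critical level set component $K$. You claim that $K$ is ``either an arc-like tree (contractible) or contains exactly one loop,'' and later that $\chi(K)\in\{1,-1\}$. This is incorrect: $K$ is \emph{always} a figure eight (a wedge of two circles), so $\chi(K)=-1$. Indeed, $K$ is a compact connected $1$-complex whose only non-manifold point is $p$, where it has a cone on four points as a local model; away from $p$ it is a $1$-manifold without boundary. Hence combinatorially $K$ has one vertex of degree $4$ and no other vertices, forcing exactly two loop edges. Your dichotomy ``contractible versus one loop'' never arises, and the subsequent case analysis in steps (4)--(5) collapses. The Euler-characteristic bookkeeping there is also off: a disk has one boundary circle, an annulus two, so neither yields $\deg(v)=3$ as you assert.

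The strategy is salvageable: with $\chi(N)=\chi(K)=-1$ one can enumerate compact surfaces with boundary of Euler characteristic $-1$, use that a saddle forces at least one boundary circle on each of the two adjacent regular levels (so $b\ge 2$), and obtain $b=3$ (pair of pants) in the orientable case and $b\in\{2,3\}$ in the non-orientable case. But the paper's route is shorter and avoids the pitfall entirely. It never looks at $K$ or its neighbourhood as a static object; instead it uses Morse theory directly: passing the critical level amounts to attaching a single $1$-handle $\de^1\times\de^1$ to $W=f^{-1}\bigl((-\infty,c-\varepsilon]\bigr)$ along an embedding $\varphi\colon \es^0\times\de^1\to\partial W$, and the degree of $v$ is read off from how the attachment changes the number of boundary components. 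There are exactly three cases --- $\varphi$ non-trivial (two components merge, $\deg(v)=3$), $\varphi$ trivial and orientation-preserving (one component splits, $\deg(v)=3$), and $\varphi$ trivial and orientation-reversing (one component stays one, $\deg(v)=2$, possible only when $\Sigma$ is non-orientable) --- and the proposition follows immediately.
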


	Recall that for an $n$-manifold $W$ with non-empty boundary an embedding $\varphi\colon\es^{k-1} \times \de^{n-k} \to \partial W$ is \textbf{non-trivial}, if $[\varphi_{|_{\es^{k-1}\times\{0\}}}] \in \pi_{k-1}(\partial W)$ is a non-trivial element of the $(k-1)$th homotopy group of $\partial W$ (taking basepoints as appropriate). In the case $k=1$ the non-triviality means that the image of $\varphi$ belongs to separate connected components of $\partial W$.

	\begin{proof}
		Critical points of index 0 or 2 are extrema of $f$, so the degrees of the corresponding vertices are equal to 1.
		
		For $\ind(p)=1$ the numbers of edges incoming to $v$ and outcoming from $v$ are related (by Morse theory) to the number of the connected components of sub-level surface after attaching a 1-handle $\de^1 \times \de^1$ using an embedding $\varphi \colon \es^0 \times \de^1 \rightarrow \partial W$, where $W = f^{-1}\left((-\infty,f(p)-\varepsilon]\right)$ for some sufficiently small $\varepsilon >0$. 
		
		If $\varphi$ is a non-trivial embedding, i.e. the two copies of $\de^1$ belong to separate boundary components, in the result these components merge, so $\deg(v) =3$. If~the embedding is trivial and preserves the orientation, the component splits into two, so $\deg(v) =3$. However, if $\varphi$ is not orientation-preserving, then we obtain one connected component, so $\deg(v)=2$.	
	\end{proof}

\begin{remark}
	Note that following the original paper of Reeb \cite[Th\'{e}or\`{e}me 3]{Reeb}, some authors (e.g. Biasotti et al. in [1]) overestimate the bound on the degree of a vertex	in the Reeb graph of a simple Morse function on a non-orientable surface. The straightforward conclusion from the Morse Lemma bounds the~degree by $4$, but the estimate given by Proposition \ref{proposition:degree_and_index} is sharp.
\end{remark}
	
	We denote by $\chi(M)$ the Euler characteristic of a closed $n$-manifold $M$. Let $f\colon M \rightarrow \R$ be a Morse function with $k_i$ critical points of index $i$. It is a standard fact that $\chi(M) = \sum_{i=0}^n(-1)^i k_i$.
	
	\begin{proof}[Proof of Lemma \ref{lemma:cycles_for_simple_Morse_function_on_surface}]
		Let $V$ and $E$ be the sets of vertices and edges of $\reeb{f}$, respectively. Since $f$ is a simple Morse function, $\chi(\Sigma) = k_0 - k_1 + k_2$ and $|V|= k_0+k_1+k_2$. By Euler's handshaking lemma $2|E| = \sum_{v\in V}\deg(v)$.
		
		The above proposition yields $2|E| = k_0+3k_1+k_2$ if $\Sigma = \Sigma_g$ is orientable. Then
		\begin{eqnarray*}
			2\beta_1(\reeb{f}) &=& 2\left(|E| - |V| +1\right) \\&=& k_0 +k_2 + 3k_1 - 2\left(k_0+k_1+k_2\right) + 2 
			\\&=& 2 - \left(k_0 -k_1+k_2\right) = 2 - \chi(\Sigma_g) = 2g.
		\end{eqnarray*}
		However, if $\Sigma = S_g$ is a non-orientable surface, the above proposition gives only $2|E| \leq k_0 +3k_1 + k_2$ and so $2\beta_1(\reeb{f}) \leq2 - \chi(S_g) = g$.
	\end{proof}

	\paragraph{ Maximizing the number of cycles.} Now we show that Reeb graphs of simple Morse functions maximize the number of cycles.

	\begin{lemma} 
		\label{lemma:maximalization_by_simple_Morse_functions}
		Let $f\colon M \rightarrow \R$ be a function with finitely many critical points on a closed manifold $M$. Then there exists a simple Morse function $g\colon M \rightarrow \R$ such that $\beta_1(\reeb{g}) \geq \beta_1(\reeb{f})$.
	\end{lemma}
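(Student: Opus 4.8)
The plan is to make the passage to a simple Morse function \emph{local}: I will modify $f$ only inside small pairwise disjoint neighbourhoods of the critical components of its level sets, and show that any such local modification can only increase the number of cycles of the Reeb graph. List the critical values $c_1<\dots<c_k$ of $f$ and pick $\varepsilon>0$ so small that the intervals $[c_j-\varepsilon,c_j+\varepsilon]$ are pairwise disjoint, none contains two critical values, and each component of $f^{-1}([c_j-\varepsilon,c_j+\varepsilon])$ contains exactly one component of $f^{-1}(c_j)$. Let $N_1,\dots,N_r$ be the components of the sets $f^{-1}([c_j-\varepsilon,c_j+\varepsilon])$ that contain a critical point, let $C_i:=N_i\cap f^{-1}(c_{j(i)})$ be the corresponding (connected) critical level-set component, and let $M':=M\setminus\bigcup_i\int N_i$. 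Then the $N_i$ are pairwise disjoint, $f|_{N_i}$ is a smooth function on the cobordism $N_i$ which is constant on each component of $\partial N_i\subseteq f^{-1}(c_{j(i)}\pm\varepsilon)$ and has all its critical points (those of $f$ lying in $C_i$) in the interior, while $f|_{M'}$ has no critical points.

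Inside each $N_i$, perturb $f|_{N_i}$, keeping it unchanged near $\partial N_i$, first to a Morse function and then — by localized bumps — so that its critical values become pairwise distinct; choosing all the new critical values distinct globally, the function $g$ that equals the perturbed $f|_{N_i}$ on each $N_i$ and equals $f$ on $M'$ is a simple Morse function on $M$. Now compare Reeb graphs. Since $C_i$ is connected and $N_i$ carries a single critical value, realised by the single critical component $C_i$, the graph $\reeb{f|_{N_i}}$ is a star with centre the vertex corresponding to $C_i$ and one leaf for each component of $\partial N_i$; in particular it is contractible, whereas $\reeb{g|_{N_i}}$ is at least connected (being a quotient of the connected space $N_i$), so $\beta_1(\reeb{g|_{N_i}})\ge 0=\beta_1(\reeb{f|_{N_i}})$. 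On the other hand, since $\partial N_i$ is a union of level sets of both $f$ and $g$, the Reeb graph $\reeb{f}$ is obtained by gluing $\reeb{f|_{M'}}$ and the graphs $\reeb{f|_{N_i}}$ along the finite sets $L_i$ of leaves corresponding to the components of $\partial N_i$; these are pairwise distinct vertices, because distinct components of the various $\partial N_i$ lie in distinct components of the regular level sets $f^{-1}(c_j\pm\varepsilon)\subseteq M'$. The same description holds for $\reeb{g}$, with the \emph{same} piece $\reeb{f|_{M'}}$ and the same gluing sets $L_i$, only $\reeb{f|_{N_i}}$ replaced by $\reeb{g|_{N_i}}$.

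Since $M$ is closed and connected, $\reeb{f}$ and $\reeb{g}$ are connected, so $\beta_1=1-\chi$; additivity of $\chi$ under the gluing above (inclusion–exclusion over the points of the $L_i$, noting that $\reeb{f|_{M'}}$ and the $L_i$ are common to both) gives
\[
\chi(\reeb{f})-\chi(\reeb{g})=\sum_i\bigl(\chi(\reeb{f|_{N_i}})-\chi(\reeb{g|_{N_i}})\bigr)=\sum_i\beta_1(\reeb{g|_{N_i}})\ge 0,
\]
that is, $\beta_1(\reeb{g})=\beta_1(\reeb{f})+\sum_i\beta_1(\reeb{g|_{N_i}})\ge\beta_1(\reeb{f})$, and $g$ is the desired simple Morse function.

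I expect the main difficulty to lie in the middle step: verifying that $\reeb{f}$ genuinely decomposes as the claimed gluing, that the leaf sets $L_i$ consist of pairwise distinct vertices (which is exactly what forces the ``tubular'' choice of $\varepsilon$ above), and that $\reeb{f|_{N_i}}$ is contractible. The last point is precisely the feature that may fail under a global perturbation — as the self-indexing example preceding Lemma~\ref{lemma:cycles_for_simple_Morse_function_on_surface} shows, where every $\beta_1(\reeb{g|_{N_i}})$ is in fact positive and $\beta_1$ strictly increases. By contrast, the local perturbation of $f|_{N_i}$ to a simple Morse function rel boundary is routine.
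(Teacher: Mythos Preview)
Your argument is correct and follows essentially the same approach as the paper: localize around each vertex of $\reeb{f}$ by taking the preimage of a small contractible neighbourhood, replace $f$ there by a simple Morse function rel boundary, and observe that the local Reeb graph passes from a tree to something connected. The only differences are in bookkeeping---the paper treats one vertex at a time and compares $\beta_1$ via a spanning-tree/quotient argument, whereas you treat all vertices simultaneously and compare via additivity of Euler characteristics, which even yields the explicit identity $\beta_1(\reeb{g})=\beta_1(\reeb{f})+\sum_i\beta_1(\reeb{g|_{N_i}})$.
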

	
	\begin{proof}
		Let $v$ be a vertex of $\reeb{f}$, $c:= \overline{f}(v)$ and let $V$ be a~small closed contractible neighbourhood of $v$
		(as in Figure \ref{rys:neighbourhood_of_vertex}), 
		such that $W:= q^{-1}(V)$ is~a~compact submanifold of $M$ with boundary $\partial W = W_- \sqcup W_+$, where $W_\pm$ is a~submanifold of $f^{-1}(c\pm\varepsilon)$ for some sufficiently small $\varepsilon >0$. By \cite[Lemma 2.8.]{Milnor} there exists a simple Morse function $h\colon W \rightarrow [c-\varepsilon,c+\varepsilon]$ on the smooth triad $(W,W_-,W_+)$ which has critical values different from critical values of  $f|_{M\setminus W}$.
		
		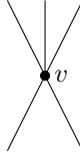
\begin{figure}[h]
			\centering
			
			\begin{tikzpicture}[scale=1]

			\filldraw (0,0) circle (1.7pt)   node[align=left, right] {$v$};

			\draw (0,0) -- (0.5,1);
			\draw (0,0) -- (0,1);
			\draw (0,0) -- (-0.5,1);
			\draw (0,0) -- (-0.5,-1);
			\draw (0,0) -- (0.5,-1);

			\end{tikzpicture}
			
			\caption{\small The small neighbourhood $V$ of the vertex $v$.}
			\label{rys:neighbourhood_of_vertex}
		\end{figure}
		
		Let us define the function $g\colon M \rightarrow \R$ by
		\begin{eqnarray*}
			g (x) =  \begin{cases}
				h(x)	        &\text{\ \ \ \ if $x \in W\!$,}\\
				f(x)	        &\text{\ \ \ \ if $x \in M \setminus \int W\!$.}
			\end{cases}
		\end{eqnarray*}
	
		Clearly, $g$ is smooth, with the same critical points as $f$ and $h$. Since $V$ is contractible we have homotopy equivalence $\reeb{f} \simeq \reeb{f}/ V$, which is in turn equivalent to $\reeb{g} / \reeb{h}$. Let $T$ be a spanning tree of the graph $\reeb{h}$. It is easily seen that $\reeb{g}/T$ is homeomorphic to $\reeb{g}/\reeb{h}$ after adding the remaining edges from $\reeb{h}/T$. Hence $$\beta_1(\reeb{g}) = \beta_1(\reeb{g}/T) \geq \beta_1(\reeb{g}/\reeb{h}) = \beta_1(\reeb{f}).$$
		
		After performing the procedure as described above for each vertex $v$ of $\reeb{f}$ we obtain the desired simple Morse function.
	\end{proof}

	\begin{remark}
		For a Morse function $f$, the simple Morse function $g$ in the above lemma can be obtained without changing the critical points of $f$ and their indices. Moreover, $g$ may differ from $f$ only in small neighbourhoods of the critical points as in \cite[Lemma 2.8.]{Milnor}.
	\end{remark}
	
	\begin{corollary}\label{corollary:reeb_number_from_simple_Morse_functions}
		$$\reeb{M} =\max \left\{\, \beta_1(\reeb{f})\, |\, f\colon M \rightarrow \R \text{ is a simple Morse function}\, \right\}\!.$$
	\end{corollary}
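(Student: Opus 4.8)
The plan is to deduce the corollary directly from Lemma \ref{lemma:maximalization_by_simple_Morse_functions} by comparing the two maxima. Every simple Morse function has finitely many critical points, so each value $\beta_1(\reeb{g})$ with $g$ a simple Morse function on $M$ occurs among the values over which the maximum in Definition \ref{def:reeb_number} is taken. Consequently, provided the right-hand side of the asserted equality is a well-defined maximum, it cannot exceed $\reeb{M}$.

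For the reverse inequality, and for well-definedness, I would start from a function $f\colon M \to \R$ with finitely many critical points realizing the maximum in Definition \ref{def:reeb_number}, so that $\beta_1(\reeb{f}) = \reeb{M}$. Applying Lemma \ref{lemma:maximalization_by_simple_Morse_functions} to $f$ produces a simple Morse function $g\colon M \to \R$ with $\beta_1(\reeb{g}) \geq \beta_1(\reeb{f}) = \reeb{M}$. In particular the set $\{\beta_1(\reeb{g}) : g \text{ a simple Morse function on } M\}$ is nonempty; it consists of non-negative integers and is bounded above by $\beta_1(M)$ (recalled after Definition \ref{def:reeb_number}), hence it attains a maximum, and that maximum is at least $\reeb{M}$. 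Combining the two inequalities yields the claimed equality.

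There is no real obstacle here: all the geometric work is already contained in Lemma \ref{lemma:maximalization_by_simple_Morse_functions}, and the corollary merely repackages it together with the definition of $\reeb{M}$. The only point deserving a word of justification is that the right-hand side is genuinely a maximum rather than a supremum, which I would handle as above via the boundedness of an integer-valued quantity over a nonempty set.
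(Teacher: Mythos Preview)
Your argument is correct and is exactly what the paper intends: the corollary is stated without proof, as an immediate consequence of Lemma~\ref{lemma:maximalization_by_simple_Morse_functions} together with Definition~\ref{def:reeb_number}. Your added remark that the right-hand side is a genuine maximum (nonempty, integer-valued, bounded by $\beta_1(M)$) is a harmless clarification the paper leaves implicit.
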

	
	By the above corollary and Lemma \ref{lemma:cycles_for_simple_Morse_function_on_surface} we conclude that $\reeb{\Sigma_g} = g$ and $\reeb{S_g} \leq \left\lfloor \frac{g}{2} \right\rfloor$. To obtain equality in the case of non-orientable surfaces we construct an explicit example of a function which realizes maximal number of cycles in Theorem \ref{thm:reeb_graphs_of_surfaces}. See also \cite{Edelsbrunner} for an alternate description.
	
	\begin{corollary}\label{corollary:Reeb_number_for_surfaces}
		$\reeb{\Sigma_g} = g$ and $\reeb{S_g} = \left\lfloor \frac{g}{2} \right\rfloor$. In other words, if a~closed surface $\Sigma$ has the Euler characteristic $\chi(\Sigma) = 2-k$, then $\reeb{\Sigma} = \left\lfloor \frac{k}{2} \right\rfloor$. \qed
	\end{corollary}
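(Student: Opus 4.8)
The plan is to deduce both equalities from results already in hand, leaving genuine work only for the lower bound in the non-orientable case. First I would invoke Corollary~\ref{corollary:reeb_number_from_simple_Morse_functions} to reduce the computation of $\reeb{\Sigma}$ to simple Morse functions. For $\Sigma = \Sigma_g$, part~(1) of Lemma~\ref{lemma:cycles_for_simple_Morse_function_on_surface} says that \emph{every} simple Morse function has exactly $g$ cycles in its Reeb graph, so $\reeb{\Sigma_g} = g$ at once. For $\Sigma = S_g$, part~(2) of the same lemma gives $\beta_1(\reeb{f}) \le \lfloor g/2 \rfloor$ for every simple Morse function $f$, hence $\reeb{S_g} \le \lfloor g/2 \rfloor$; it remains to produce a function attaining this value.

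For the reverse inequality I would give an explicit handle construction of $S_g$ together with a compatible Morse function (this is carried out in full in Theorem~\ref{thm:reeb_graphs_of_surfaces}). Build the surface from a single $0$-handle (a global minimum), then $\lfloor g/2 \rfloor$ \emph{cycle blocks} --- each a pair of index-$1$ handles at consecutive levels, the lower one splitting the current boundary circle into two by a trivial orientation-preserving attachment and the upper one re-merging them by a non-trivial attachment with feet on the two separate components --- then, when $g$ is odd, one further index-$1$ handle attached as an orientation-reversing (M\"obius) band, and finally a single $2$-handle (a global maximum). To make the surface non-orientable, one of the re-merging bands is given a twist (when $g$ is even, where no M\"obius handle is present); in every case a count of handles gives $\chi = 1 - g + 1 = 2-g$, so by the classification of surfaces the result is $S_g$. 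In the Reeb graph each cycle block produces two vertices of degree $3$ joined by a double edge, contributing one independent cycle; consecutive blocks are joined by single edges; and the extra M\"obius handle, if present, contributes only a vertex of degree $2$ by Proposition~\ref{proposition:degree_and_index}, which merely subdivides an edge. Thus $\reeb{f}$ is a linear chain of $\lfloor g/2 \rfloor$ ``theta'' pieces, so $\beta_1(\reeb{f}) = \lfloor g/2 \rfloor$ --- this can also be checked directly by the $|V|$, $|E|$ bookkeeping used in the proof of Lemma~\ref{lemma:cycles_for_simple_Morse_function_on_surface}. Hence $\reeb{S_g} = \lfloor g/2 \rfloor$.

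Finally I would record the reformulation in terms of the Euler characteristic: since $\chi(\Sigma_g) = 2 - 2g$ and $\chi(S_g) = 2 - g$, writing $\chi(\Sigma) = 2-k$ gives $k = 2g$ with $\lfloor k/2 \rfloor = g = \reeb{\Sigma_g}$ in the orientable case and $k = g$ with $\lfloor k/2 \rfloor = \lfloor g/2 \rfloor = \reeb{S_g}$ in the non-orientable case. The only genuinely nontrivial ingredient is the construction of the non-orientable example, and within it the delicate point is choosing the attaching maps so that the surface is \emph{exactly} $S_g$ while every interior index-$1$ vertex keeps degree $3$, so that the cycle count is maximal; the Euler-characteristic bookkeeping and the passage to the $\chi$-formulation are routine.
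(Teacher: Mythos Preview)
Your proposal is correct and follows essentially the same route as the paper: the upper bounds come from Corollary~\ref{corollary:reeb_number_from_simple_Morse_functions} together with Lemma~\ref{lemma:cycles_for_simple_Morse_function_on_surface}, and the missing lower bound $\reeb{S_g}\ge\lfloor g/2\rfloor$ is obtained by an explicit handle construction, which the paper simply forward-references to Theorem~\ref{thm:reeb_graphs_of_surfaces} (via Corollary~\ref{corollary:surface_from_theorem} and Remark~\ref{remark:additional_non_orientable_1-handles}). Your inline description of the chain-of-thetas example is a concrete instance of that general construction, so the two arguments coincide in substance; the only difference is that you spell out a specific model rather than invoking the later machinery.
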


	The upper bound $\reeb{\Sigma_g} \leq g$ can also be found in \cite[Theorem 5.6.]{KMS}, where the proof relies on the maximal number of non-separating circle embeddings in an orientable surface.

	For an orientable, closed manifold $M$ Gelbukh \cite[Theorem 13]{Gelbukh:Reeb_graph} proved that the Reeb number $\reeb{M}$ is equal to the co-rank of the fundamental group $\pi_1(M)$ (i.e. the maximal number $r$ for which there exists epimorphism $\pi_1(M)\to \F_r$ onto the free group of rank $r$). She also showed that $\reeb{M}$ can be attained by the Reeb graph of a~simple Morse function.
		
	\section{Realization by a Morse function}\label{section:Realization_by_Morse_function}
	
	\begin{lemma}\label{lemma:realization_of_neighbourhood_of_vertex_with_spheres_on_the_boundary}
		Let $n \geq 2$, $k_-,k_+ \geq 1$ and $t \geq 0$ ($k_-$ or $k_+ \geq 2$ for $t=0$) be integers and let $c\in (a,b)$. Then there exist a compact $n$-manifold $N$ with boundary $\partial N= N_- \sqcup N_+$, where $N_\pm := \bigsqcup_{i=1}^{k_\pm} \es^{n-1}$ is the disjoint union of $(n-1)$-dimensional spheres with the standard smooth structure, and a~Morse function $f\colon N \rightarrow [a,b]$ on the smooth triad $(N,N_-,N_+)$ with only one critical value $c$ and $(k_++t-1)$ critical points of index $(n-1)$ and $(k_-+t-1)$ critical points of index $1$.
	\end{lemma}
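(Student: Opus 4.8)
The plan is to realize $N$ as a single ``multi‑handle'' cobordism in which every handle is attached along a region of one fixed level set, so that one critical level suffices. Write $\es^{n-1}_1,\dots,\es^{n-1}_{k_-}$ for the components of $N_-=\bigsqcup_{i=1}^{k_-}\es^{n-1}$ and fix $\varepsilon>0$ with $[c-\varepsilon,c+\varepsilon]\subset(a,b)$. Let $H$ be the manifold obtained from the collar $N_-\times[c-\varepsilon,c]$ by attaching to $N_-\times\{c-\varepsilon\}$, along pairwise disjoint attaching regions, the following handles: (i) for $j=1,\dots,k_--1$, a handle of index $1$ joining a small $(n-1)$‑disk in $\es^{n-1}_1$ to a small $(n-1)$‑disk in $\es^{n-1}_{j+1}$; (ii) $k_+-1$ handles of index $n-1$, attached along $k_+-1$ disjoint unknotted spheres $\es^{n-2}\times\de^1\subset\es^{n-1}_1$ arranged (nested) so that their cores separate $\es^{n-1}_1$ into $k_+$ regions, with all the feet from (i) lying in one fixed region $R$; and (iii) for $\ell=1,\dots,t$, a handle of index $n-1$ along a small $\es^{n-2}\times\de^1\subset R$ together with a handle of index $1$ having one foot in the small disk cut off by that $\es^{n-2}$ and one foot outside it in $R$. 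This attaches exactly $k_-+t-1$ handles of index $1$ and $k_++t-1$ of index $n-1$, and such a disjoint configuration of finitely many handles plainly exists since $N_-$ is a disjoint union of $(n-1)$‑spheres. I would then set
\[
N:=\bigl(N_-\times[a,c-\varepsilon]\bigr)\ \cup\ H\ \cup\ \bigl(\partial_+H\times[c+\varepsilon,b]\bigr),
\]
where $\partial_+H$ is the upper boundary of $H$, glued in the obvious way.

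For the function, $H$ admits a Morse function onto $[c-\varepsilon,c+\varepsilon]$ equal to the projection near $N_-\times\{c-\varepsilon\}$ and to the standard quadratic model on each handle; its only critical value is $c$ and its critical points are exactly the handle cores, of the stated indices (for $n=2$ a ``handle of index $n-1$'' is again a $1$‑handle, and the statement then just asserts $(k_++t-1)+(k_-+t-1)$ saddles). Extending by the projections on the two product collars gives a Morse function $f\colon N\to[a,b]$ on the triad $(N,N_-,N_+)$, \emph{provided} $\partial_+H\cong N_+$. Note that $N$ is connected, since the $1$‑handles of family (i) join all $k_-$ components of the lower collar through $H$; and if $t=0$ the hypothesis $k_-\ge2$ or $k_+\ge2$ ensures that at least one handle is attached, so $f$ genuinely has a critical value (for $k_-=k_+=1$, $t=0$ one would only get the critical‑point‑free cylinder $\es^{n-1}\times[a,b]$, which is why that case is excluded).

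The remaining, and main, point is to identify $\partial_+H$; it is the outcome of performing on $N_-$ the surgeries dual to the attached handles, and since the attaching regions are disjoint this does not depend on the order. Doing them family by family: the $0$‑surgeries of (i) join the $k_-$ spheres of $N_-$ along a tree, giving a single $\es^{n-1}$ (an iterated connected sum of spheres); the $(n-2)$‑surgeries of (ii) cut this sphere, along the chosen nested unknotted $(n-2)$‑spheres, into $k_+$ regions, and capping the boundary spheres of each region turns it into an $\es^{n-1}$; and each pair of (iii) first splits off an $\es^{n-1}$ along a trivial $\es^{n-2}$ and then re‑attaches it by a tube whose two ends land on the two pieces, leaving the number and the diffeomorphism types of the components unchanged. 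Hence $\partial_+H\cong\bigsqcup_{i=1}^{k_+}\es^{n-1}=N_+$, as needed. The delicate part here is checking that the prescribed nested unknotted $(n-2)$‑spheres still separate the merged sphere into exactly $k_+$ pieces once the tubes and ``bubbles'' of (i) and the handles of (iii) have been grafted into the region $R$; with the attaching data drawn explicitly this is the routine picture of iterated surgery on unknotted spheres, but it is the step I would write out in full.
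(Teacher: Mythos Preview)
Your construction is correct and takes a genuinely different route from the paper's. You attach all $k_-+k_++2t-2$ handles \emph{simultaneously} along pairwise disjoint attaching regions in one level set, so the single critical value is built in from the start; the work is then the surgery bookkeeping needed to identify $\partial_+H$, which you carry out family by family and correctly flag as the point requiring care. The paper instead attaches the handles \emph{sequentially}: first the $(k_++t-1)$ handles of index $n-1$ along trivial embeddings (each splitting one boundary sphere into two), then the $(k_-+t-1)$ $1$-handles along non-trivial embeddings (each merging two boundary spheres). At every stage the effect on the boundary is then obvious, so no surgery analysis is needed; but the resulting Morse function has its index-$(n-1)$ critical points \emph{below} its index-$1$ points, and the paper finishes by invoking Milnor's rearrangement theorems to slide all critical points to the common level $c$. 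Thus your approach trades the Milnor black box for explicit geometry, while the paper's trades a slightly opaque citation for a transparent boundary identification. One small slip: you say the handles are attached to $N_-\times\{c-\varepsilon\}$, but everything else in your setup (the collar is $N_-\times[c-\varepsilon,c]$, the Morse function on $H$ equals the projection near $N_-\times\{c-\varepsilon\}$ and has image $[c-\varepsilon,c+\varepsilon]$) shows you mean $N_-\times\{c\}$.
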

	
		We will use the correspondence between attaching $k$-handles and  critical points of index $k$ \cite[Theorem 3.12.]{Milnor}. From \cite[Lemma 2.1.]{Kervaire} we know that the sphere $\es^{n-1}$ with the standard smooth structure serves as identity element of the~connected sum operation. As the result of attaching a 1-handle along a non-trivial	embedding $\es^0 \times \de^{n-1} \to \partial W \cong \es^{n-1} \sqcup \es^{n-1}	\sqcup P$ with the image contained in $\es^{n-1} \sqcup \es^{n-1}$ we obtain a new manifold $W'$ with $\partial W' \cong (\es^{n-1}\#\es^{n-1})
		\sqcup P \cong \es^{n-1} \sqcup P$. The operation thus reduces the number of spheres in the boundary of~$W$. The dual operation to the above (attachment of $(n-1)$-handle via trivial embedding) increases the number of connected components --- one copy of $S^{n-1}$ splits into two copies of $\es^{n-1}$.
		
		\begin{proof}
		Let $N_\pm$ be as in the statement. We start with $N_- \times [a,a+\varepsilon]$ and attach $(k_++t-1)$ $(n-1)$-handles along trivial embeddings with the correct orientation to $N_-\times \{1\}$, as described above. This produces a manifold $W$ with $\partial W = N_- \sqcup P$, where $P$ is the disjoint union of $(k_- + k_++t-1)$ copies of $\es^{n-1}$. On $N_- \times [a,a+\varepsilon]$ the projection on the second factor is a Morse function without critical points, so attaching the handles corresponds (by \cite[Theorem 3.12.]{Milnor}) to a Morse function $h\colon W \rightarrow  [a,a+2\varepsilon]$ with $(k_++t-1)$ critical points all of index $(n-1)$.
		 
		  Next we attach $(k_-+t -1)$ 1-handles to $W$ along non-trivial embeddings to $P$ to obtain a connected manifold $N$ with boundary $\partial N = N_- \sqcup N_+$ (the manifold $W$ has $k_-$ connected components). Hence there exists a Morse function $h'\colon N \rightarrow [a,b]$ with $(k_++t-1)$ critical points of index $(n-1)$ and $(k_-+t-1)$ critical points of index~1.
		
		Since we attached $(n-1)$-handles before 1-handles, the critical points of the function $h'$ of index $(n-1)$ are below the critical points of index 1. Therefore we can apply the method from \cite[Theorem 4.1.; Extension 4.2.; Theorem 4.4.]{Milnor} to obtain a new Morse function $f\colon N \rightarrow [a,b]$ on the smooth triad $(N,N_-,N_+)$ with all criticals points moved to one critical level~$c$. 
	\end{proof}
	
	\begin{remark}\label{remark:additional_non_orientable_1-handles}
		In the same way as in the above lemma we can obtain a non-orientable surface $N$ (for $n=2$ and $t=0$) with a Morse function with a unique critical value and $ k_- + k_+ +r -2$ critical points of index 1, by attaching $r$ additional 1-handles along trivial embeddings with non-compatible orientations.
	\end{remark}
	
	Sharko \cite[Theorem 2.1.]{Sharko} proved that for a graph $\Gamma$ with good orientation there exist a surface $\Sigma$ and a function $f\colon \Sigma \rightarrow \R$ with finitely many critical points such that $\reeb{f}$ is isomorphic to~$\Gamma$. In general, $f$ may have degenerate critical points, so it is not a Morse function. Sharko stated (without a proof) that methods of Takens in \cite{Takens} lead to a version of this theorem for $n$-dimensional manifolds. We will give an explicit construction, in terms of handle decomposition, of such a manifold $M$ of an arbitrary given dimension $n\geq 2$ and a Morse function on $M$ such that its Reeb graph is isomorphic to $\Gamma$.

	\begin{theorem}\label{thm:graph_as_reeb_graph}
		Let $\Gamma$ be a finite graph with good orientation and $n\geq 2$. Then there exist a smooth closed $n$-manifold $M$ and a Morse function $f\colon M \rightarrow \R$ such that the Reeb graph $\reeb{f}$ is isomorphic to $\Gamma$.
	\end{theorem}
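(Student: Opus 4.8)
The plan is to decompose the graph $\Gamma$ along its vertices, realize the neighbourhood of each vertex as a piece of a Morse function using Lemma~\ref{lemma:realization_of_neighbourhood_of_vertex_with_spheres_on_the_boundary}, and then glue all these pieces together along spheres in their boundaries so that the resulting global Morse function has $\reeb{f}\cong\Gamma$. First I would fix a good orientation on $\Gamma$ together with the continuous monotone function $g\colon\Gamma\to\R$ inducing it, and perturb $g$ so that all vertices sit at distinct heights $c_1<c_2<\dots<c_m$ (this is harmless for the isomorphism type). For each vertex $v$ with $\degin(v)=k_-$ and $\degout(v)=k_+$ I invoke Lemma~\ref{lemma:realization_of_neighbourhood_of_vertex_with_spheres_on_the_boundary} to obtain a triad $(N_v,N_{v,-},N_{v,+})$ with a Morse function $f_v$ having a single critical value $c_v$, whose Reeb graph is the ``star'' at $v$ — one central vertex joined by $k_-$ incoming and $k_+$ outgoing edges; the boundary spheres are in natural bijection with the edge-ends at $v$. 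The good orientation guarantees $k_-,k_+\ge1$ for every internal vertex and $k_-$ or $k_+$ $\ge2$ when $\deg(v)\ge2$; vertices of degree $1$ (the extrema of $g$) are handled separately by capping a boundary sphere with a disk carrying a single index-$0$ or index-$n$ critical point.

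Next I would lay these pieces along the $\R$-axis in the order dictated by $c_1<\dots<c_m$ and connect them by cylinders $\es^{n-1}\times[\,\cdot\,,\cdot\,]$ carrying the critical-point-free height function. Concretely, each edge $e$ of $\Gamma$ runs from some vertex $v$ at height $c_v$ up to some vertex $w$ at height $c_w>c_v$; I glue the boundary sphere of $N_v$ corresponding to the end of $e$ at $v$, via the identity, to one end of a cylinder $\es^{n-1}\times[c_v,c_w]$, and the other end of that cylinder to the matching boundary sphere of $N_w$. Doing this simultaneously for all edges produces a closed $n$-manifold $M$, and the functions $f_v$ together with the cylinder height functions patch to a smooth Morse function $f\colon M\to\R$ (smoothness at the gluing spheres is arranged by a standard collar/bump-function argument, exactly as in the gluing of the pieces in Lemma~\ref{lemma:realization_of_neighbourhood_of_vertex_with_spheres_on_the_boundary} and in \cite[\S3]{Milnor}). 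The critical points of $f$ are exactly those of the $f_v$'s, so $f$ is a Morse function.

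It remains to identify $\reeb{f}$ with $\Gamma$. A cylinder $\es^{n-1}\times[c_v,c_w]$ contributes no critical component and contracts to a single open arc in the Reeb graph; the piece $N_v$ contributes, after the earlier star-description, precisely the vertex $v$ together with stubs of its $\deg(v)$ edges; and the extrema pieces contribute the degree-$1$ vertices. Because connected components of level sets match up correctly across each gluing sphere (each such sphere is a single connected component of the level set $f^{-1}(c)$ and is shared by exactly the piece and the cylinder meeting along it), assembling the local Reeb graphs along the shared arcs reproduces $\Gamma$ as an oriented graph: the orientation on each edge agrees with the direction of increasing $f$, which by construction agrees with the chosen good orientation. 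I expect the main obstacle to be precisely this last bookkeeping step — verifying that no unintended merging or splitting of level-set components occurs at the gluing spheres and that the orientation of every edge comes out right — together with the care needed to perform all the boundary identifications simultaneously while keeping $f$ smooth; the existence of the local models is already supplied by Lemma~\ref{lemma:realization_of_neighbourhood_of_vertex_with_spheres_on_the_boundary}, so the dimension $n$ plays no further role beyond that lemma.
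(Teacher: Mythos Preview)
Your approach is essentially the same as the paper's: cut $\Gamma$ into vertex-stars and edge-arcs, realize each star via Lemma~\ref{lemma:realization_of_neighbourhood_of_vertex_with_spheres_on_the_boundary}, put cylinders $\es^{n-1}\times[\,\cdot\,,\cdot\,]$ over the edges, and glue. The identification of $\reeb{f}$ with $\Gamma$ is exactly the paper's argument (connectedness of $N_v$ plus a single critical level forces $\reeb{f_v}$ to be the star at $v$).

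There is, however, one genuine error. You assert that ``the good orientation guarantees \ldots\ $k_-$ or $k_+\ge 2$ when $\deg(v)\ge 2$''. This is false: a vertex with $\degin(v)=\degout(v)=1$ is not an extremum of $g$, so it is perfectly compatible with a good orientation, yet has $k_-=k_+=1$. For such a vertex Lemma~\ref{lemma:realization_of_neighbourhood_of_vertex_with_spheres_on_the_boundary} with $t=0$ is explicitly excluded, and indeed the construction there would degenerate to a bare cylinder $\es^{n-1}\times[a,b]$ with \emph{no} critical points, hence no vertex in the Reeb graph at all. The paper avoids this by invoking the lemma with $t=1$ at every vertex of degree $\ge 2$; this forces $k_\pm+t-1\ge 1$ handles of each kind, so $N_v$ is connected with at least two critical points on the single critical level, and $\reeb{f_v}$ is the desired star. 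Once you insert $t=1$ (and drop the incorrect claim about $k_\pm$), your argument goes through and coincides with the paper's.
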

	
	\begin{proof}
		Let $g\colon \Gamma \rightarrow \R$ be a function inducing the good orientation of $\Gamma$. Since $\Gamma$ is finite we can define the positive real number $$\varepsilon := \frac{1}{3}\min \left\{ |g(v) - g(w)| : \text{$v$ and $w$ are ends of an edge of $\Gamma$} \right\}\!.$$
				
		Let $v$ be a vertex of $\Gamma$ of degree one. Set $N_v := \de^n$ and define $f\colon N_v \to [g(v) - \varepsilon, g(v) + \varepsilon]$ by $f_v(x_1,\ldots,x_n) = g(v) \pm \varepsilon( x_1^2 + \ldots + x_n^2)$, where sign $\pm$ depends whether  $v$ is minimum or maximum of $g$. Obviously $f_v$ has one critical point in the center of the disc.
		
		The function $g$ has extrema only in vertices of degree 1, hence for each vertex $v$ of degree at least two we have inequalities $k_- := \degin(v) \geq 1$ and $k_+ := \degout(v) \geq 1$. By Lemma \ref{lemma:realization_of_neighbourhood_of_vertex_with_spheres_on_the_boundary} 
		(for $t=1$) we obtain an $n$-manifold $N_v$ with boundary $\partial N_v = N_v^- \sqcup N_v^+$, where $N_v^\pm := \bigsqcup_{i=1}^{k_\pm} \es^{n-1}$ and a Morse function $f_v\colon N_v \rightarrow [g(v) -\varepsilon, g(v) + \varepsilon]$ such that $f_v^{-1}\left(g(v)\pm \varepsilon\right) = N_v^\pm$ and the only critical value of $f_v$ is $g(v)$ 
		(it has $k_-+k_+ +2t -2 \geq 2$ critical points). 
		
		 In both cases, since $f_v$ has one critical level and $\reeb{f_v}$ is connected (as the image of the connected manifold $N_v$), it has one internal vertex corresponding to one critical value of $f_v$ and has $k_\pm$ ends of edges on the level $g(v)\pm \varepsilon$. Therefore $\reeb{f_v}$ is homeomorphic to a small neighbourhood of $v$ in $\Gamma$.
		
		For each oriented edge $e$ in $\Gamma$ from a vertex $v$ to $w$ we form the manifold $N_e = \es^{n-1} \times [g(v) + \varepsilon, g(w)-\varepsilon]$ and the Morse function $f_e\colon N_e \rightarrow [g(v) + \varepsilon, g(w)-\varepsilon]$, by the projection on the second factor.
		
		We glue smoothly the manifolds $N_v$ and $N_e$ using a diffeomorphism between (the corresponding to $e$) component $\es^{n-1}$ of $N_v^+$ and $\es^{n-1} \times \{g(v) + \varepsilon\}$. Similarly, we glue $N_w$ and $N_e$.
		On the resulting smooth manifold we define a function which is the piecewise extension of $f_v$, $f_e$ and $f_w$ and has the same critical points.
		
		Performing the operation as described above for each edge of $\Gamma$ we obtain a~smooth closed $n$-manifold $M$ and a Morse function $f\colon M \rightarrow \R$. It is clear from the construction that the Reeb graph $\reeb{f}$ is isomorphic to $\Gamma$. 
	\end{proof}
	
	\section{Solution of the main problem for surfaces}\label{section:main_problem_for_surfaces}

	\begin{corollary}\label{corollary:euler_char_for_N_v}
		For $n=2$ the surface $N$ obtained in Lemma \ref{lemma:realization_of_neighbourhood_of_vertex_with_spheres_on_the_boundary} has the Euler characteristic $\chi(N) = 2 - (k_- + k_++2t)$.
	\end{corollary}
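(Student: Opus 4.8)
The plan is to read off $\chi(N)$ directly from the handle decomposition produced in the proof of Lemma~\ref{lemma:realization_of_neighbourhood_of_vertex_with_spheres_on_the_boundary}. I will use the elementary fact that attaching a $k$-handle to a manifold with boundary changes the Euler characteristic by $(-1)^k$, since such an attachment is homotopy equivalent to attaching a single $k$-cell. Equivalently, a Morse function with $c_i$ critical points of index $i$ on a smooth triad $(W,W_-,W_+)$ exhibits $W$ as homotopy equivalent to $W_-$ with $c_i$ cells of dimension $i$ attached for each $i$, so that $\chi(W) = \chi(W_-) + \sum_i (-1)^i c_i$. Either formulation suffices.

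First I would record the ingredients for $n=2$. The base piece $N_- \times [a,a+\varepsilon]$ is a disjoint union of $k_-$ cylinders $\es^1 \times [a,a+\varepsilon]$, hence $\chi(N_- \times [a,a+\varepsilon]) = \chi(N_-) = 0$. By Lemma~\ref{lemma:realization_of_neighbourhood_of_vertex_with_spheres_on_the_boundary} the function $f$ has $(k_+ + t - 1)$ critical points of index $n-1$ and $(k_- + t - 1)$ critical points of index $1$; the key observation is that for $n=2$ the index $n-1$ equals $1$, so in fact all $k_- + k_+ + 2t - 2$ critical points have index $1$, and correspondingly all the handles used in the construction (both those splitting off boundary spheres and those joining connected components) are $1$-handles. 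Each of them decreases the Euler characteristic by exactly $1$.

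Putting these together gives $\chi(N) = \chi(N_-) + (-1)^1 \bigl((k_+ + t - 1) + (k_- + t - 1)\bigr) = 0 - (k_- + k_+ + 2t - 2) = 2 - (k_- + k_+ + 2t)$, as claimed. One may also check consistency with Remark~\ref{remark:additional_non_orientable_1-handles}: attaching $r$ further $1$-handles there lowers $\chi$ by an additional $r$, matching the extra critical points.

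I do not expect a genuine obstacle here, since the statement is a bookkeeping consequence of the construction. The only point that needs a moment's care is the coincidence of the indices $n-1$ and $1$ in dimension two, which is precisely why both families of handles contribute with the same sign and the two handle counts simply add, rather than partially cancelling as they would for $n\geq 3$.
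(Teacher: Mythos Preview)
Your proof is correct and follows essentially the same approach as the paper: both compute $\chi(N)$ from the handle decomposition by noting that the base $\bigsqcup_{i=1}^{k_-}\es^1\times[0,1]$ has $\chi=0$ and that for $n=2$ all $k_-+k_++2t-2$ handles are $1$-handles. The only cosmetic difference is that the paper justifies the ``$(-1)^k$ per handle'' step via the inclusion--exclusion formula $\chi(N)=\chi(W)+r\,\chi(\de^1\times\de^1)-r\,\chi(\es^0\times\de^1)$, whereas you invoke the homotopy equivalence with attaching a $k$-cell; the arithmetic and outcome are identical.
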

	
	\begin{proof}
		Recall that $N$ is formed from $W = \bigsqcup_{i=1}^{k_-} \es^{1} \times [0,1]$, $(k_++t-1)$ handles of index $n-1=1$ and $(k_-+t-1)$ handles of index 1, i.e. $r:= k_-+k_++2t-2$ handles of index 1 in total. We have $\chi(W) = 0$, $\chi(\de^1 \times \de^1)=1$ and $\chi(\es^0 \times \de^1) = 2$. Since the 1-handles $\de^1 \times \de^1$ are attached along $\es^0 \times \de^1$ we obtain $\chi(N) = \chi(W) + r\cdot \chi(\de^1 \times \de^1) - r\cdot\chi(\es^0\times \de^1)
		= -r = 2 - (k_- + k_++2t)$. 
	\end{proof}
	
	We need to modify the construction of the surface in the proof of Theorem~\ref{thm:graph_as_reeb_graph} (for $n=2$) to obtain desirable properties.
	
	\begin{corollary}\label{corollary:surface_from_theorem}
		Let $\Gamma = \Gamma(V,E)$ be a graph with good orientation and let $g:=\beta_1(\Gamma)$. Then there exist a closed surface $\Sigma$ and a function $f\colon \Sigma \to \R$ with finitely many critical points such that $\reeb{f}$ is isomorphic to $\Gamma$ and $\Sigma$ can be taken to be either orientable of genus $g$ or non-orientable of genus $2g$ (for $g\geq 1$). If $\Gamma$ is a tree, then $\Sigma$ is diffeomorphic to $\es^2$.
	\end{corollary}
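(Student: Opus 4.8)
The plan is to refine the construction in the proof of Theorem~\ref{thm:graph_as_reeb_graph} for $n=2$, choosing the local pieces so that the Euler characteristic and the orientability of the resulting surface are exactly what we want. Fix, as there, a function $\bar g\colon\Gamma\to\R$ inducing the good orientation, and for each edge $e$ take $N_e=\es^{1}\times I$ with $f_e$ the projection, so $\chi(N_e)=0$. For a vertex $v$ of degree $1$, take $N_v=\de^{2}$ with a single extremum, so $\chi(N_v)=1$. For a vertex $v$ with $\deg(v)\ge3$ we have $k_-:=\degin(v)\ge1$, $k_+:=\degout(v)\ge1$ and $\max(k_-,k_+)\ge 2$, so Lemma~\ref{lemma:realization_of_neighbourhood_of_vertex_with_spheres_on_the_boundary} applies \emph{with $t=0$}: the resulting $N_v$ is an orientable planar surface (a sphere with $\deg(v)$ boundary circles) with a single critical value, so by Corollary~\ref{corollary:euler_char_for_N_v} we get $\chi(N_v)=2-\deg(v)$, while, exactly as in the proof of Theorem~\ref{thm:graph_as_reeb_graph}, $\reeb{f_v}$ has a single interior vertex, of degree $\deg(v)$.

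The remaining case $\deg(v)=2$ (where $t=0$ is excluded and $t\ge1$ gives too negative an Euler characteristic) is the step I expect to be the main obstacle. Here I would realize $v$ on the cylinder $N_v=\es^{1}\times I$ by a function $f_v$ with a single, necessarily \emph{degenerate}, critical point, whose Reeb graph is an edge with one interior vertex of degree $2$. Such an $f_v$ can be produced by modifying the projection inside a small disc by a local model of the type $(x,y)\mapsto x^{2}+y^{3}$, whose only critical point is isolated and degenerate and whose level sets are single arcs; then every level set of $f_v$ is a connected circle, the critical level is one circle through the critical point, and $\chi(N_v)=0=2-\deg(v)$. (No \emph{Morse} function on a surface of Euler characteristic $0$ realizes a clean degree-$2$ vertex, which is exactly why a degenerate point is needed here.)

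Gluing the pieces $N_v$ and $N_e$ along circles as in the proof of Theorem~\ref{thm:graph_as_reeb_graph} gives a closed surface $\Sigma$, a function $f$ on it with finitely many critical points, and an isomorphism $\reeb{f}\cong\Gamma$. Since every gluing is along a circle and $\chi(\es^{1})=\chi(N_e)=0$, additivity of the Euler characteristic gives $\chi(\Sigma)=\sum_{v\in V}\chi(N_v)=\sum_{v\in V}\bigl(2-\deg(v)\bigr)=2\bigl(1-\beta_1(\Gamma)\bigr)=2-2g$. All the pieces $N_v,N_e$ are orientable, so by fixing orientations on them and choosing every gluing diffeomorphism compatibly, $\Sigma$ becomes orientable; hence $\Sigma\cong\Sigma_g$, and $\Sigma\cong\es^{2}$ when $\Gamma$ is a tree (so $g=0$).

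For the non-orientable realization, assume $g\ge1$, so $\Gamma$ contains a cycle $C$. Pick an edge $e_0$ of $C$ and replace one of the two gluing diffeomorphisms attaching $N_{e_0}$ by its composition with an orientation-reversing diffeomorphism of $\es^{1}$. This alters neither $f$ nor its level sets (it is a diffeomorphism between circles on which $f$ is constant), hence not $\reeb{f}$, and it leaves $\chi(\Sigma)=2-2g$ unchanged; but now the loop in $\Sigma$ running along $C$ reverses orientation, so $\Sigma$ is non-orientable, and a non-orientable closed surface with $\chi=2-2g$ is $S_{2g}$. Hence $\Sigma$ may be taken orientable of genus $g$ or non-orientable of genus $2g$, as required.
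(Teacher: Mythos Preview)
Your argument is correct and follows essentially the same route as the paper's own proof: both take $t=0$ in Lemma~\ref{lemma:realization_of_neighbourhood_of_vertex_with_spheres_on_the_boundary} for vertices of degree $\ge 3$, realize degree-$2$ vertices on a cylinder via a single degenerate critical point (the paper uses the local model $-x^{2}+y^{3}$ where you use $x^{2}+y^{3}$, which works equally well), compute $\chi(\Sigma)=\sum_{v}(2-\deg v)=2-2g$, and control orientability through the gluing maps along edges. Your treatment of the non-orientable case---flipping one gluing on an edge of a cycle and tracking the orientation along the resulting loop---is slightly more explicit than the paper's remark that orientability ``depends on the way of attaching $N_e$ for edges $e$ outside a spanning tree'', but the content is the same.
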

	\begin{proof}
		First, in the construction in the proof of Theorem~\ref{thm:graph_as_reeb_graph} (for $n=2$) we change the surfaces $N_v$ and functions $f_v$ by using $t=0$ in Lemma \ref{lemma:realization_of_neighbourhood_of_vertex_with_spheres_on_the_boundary}. Moreover, for each vertex $w$ of $\Gamma$ such that $\degin(w) = \degout(w) = 1$ let $N_w$ be $\es^1 \times [-\varepsilon,\varepsilon]$. We take a function $f_w\colon N_w \to [g(w) -\varepsilon, g(w) + \varepsilon]$ on the triad $(N_w, \es^1\times\{-\varepsilon\},\es^{1}\times\{\varepsilon\})$ with one (degenerate) critical point, which in local coordinates can be written as $f_w(x,y) = -x^2 +y^3 +g(w)$. Its Reeb
		graph is homeomorphic to a small neighbourhood of $w$ in $\Gamma$. Let $\Sigma$ be the resulting closed surface.
		
		By the previous corollary each surface $N_v$ has the Euler characteristic $\chi(N_v) = 2 -(\degin(v) + \degout(v))= 2 - \deg(v)$ and each $N_e$ has $\chi(N_e) = 0$, where $v\in V$ and $e\in E$. To obtain $\Sigma$ we attach $N_e$ along copies of $\es^1$, hence
		\begin{eqnarray*}
			\chi(\Sigma) &=& \sum_{v\in V} \chi(N_v) =\sum_{v\in V} \left(2 - \deg(v)\right) = 2|V| - \sum_{v\in V} \deg(v) \\&=& 2|V| - 2|E| = 2 - 2\left(|E|-|V|+1\right) = 2 - 2g,
		\end{eqnarray*}
		where $g = \beta_1(\Gamma)$.
		
		If $\Gamma$ is a tree, then $g=0$, so $\chi(\Sigma) = 2$ and, in consequence, $\Sigma \cong \es^2$. For $g\geq 1$ it is clear that the constructed surface in Lemma \ref{lemma:realization_of_neighbourhood_of_vertex_with_spheres_on_the_boundary} (for $n=2$) is orientable, so $\Sigma$ can be orientable or not, what depends on the way of attaching $N_e$ for edges $e$ outside a spanning tree of $\Gamma$. Therefore the corollary follows from the classification of closed surfaces. 
	\end{proof}

	\begin{remark}\label{remark:Reeb_Theorem}
		Consider the graph $\Gamma_0$ with two vertices and one edge joining them. If $f\colon M\rightarrow \R$ is a function with finitely many critical points on a closed manifold $M$ such that $\reeb{f}$ is isomorphic to $\Gamma_0$, then $f$ has only two critical points. Reeb Theorem \cite[Theorem 1']{Milnor:Reeb} asserts that $M$ is homeomorphic to the $n$-dimensional sphere $\es^{n}$.
	\end{remark}

	It turns out that $\Gamma_0$ is the only graph which does not arise as the Reeb graph for surfaces other than the sphere. The answer to the question in Problem \ref{main_problem} is~the following theorem.

	\begin{theorem}\label{thm:reeb_graphs_of_surfaces}
		Let $\Gamma \neq \Gamma_0$ be a finite graph with good orientation and $\Sigma$ be a~closed surface. Then there exists a function $f\colon \Sigma \rightarrow \R$ with finitely many critical points such that $\reeb{f}$ is isomorphic to $\Gamma$ if and only if $\beta_1(\Gamma) \leq \reeb{\Sigma}$. If~$\Gamma = \Gamma_0$, then it can be realized only for $\Sigma = S^2$.
	\end{theorem}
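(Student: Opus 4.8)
The plan is to prove the two implications separately, with essentially all of the work in the ``if'' direction. The excluded graph $\Gamma_0$ is disposed of first: if $f\colon\Sigma\to\R$ has finitely many critical points and $\reeb{f}\cong\Gamma_0$, then $f$ has exactly two critical points and Remark \ref{remark:Reeb_Theorem} (Reeb's theorem) gives $\Sigma\cong\es^2$; conversely a linear functional restricted to $\es^2\subset\R^3$ realizes $\Gamma_0$. For $\Gamma\ne\Gamma_0$ the ``only if'' direction is immediate, since $\reeb{f}\cong\Gamma$ forces $\beta_1(\Gamma)=\beta_1(\reeb{f})\le\reeb{\Sigma}$ by Definition \ref{def:reeb_number} and equips $\Gamma$ with the good orientation induced by $\overline f$.

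For the ``if'' direction fix a good orientation $g\colon\Gamma\to\R$, put $\gamma:=\beta_1(\Gamma)$, and assume $\gamma\le\reeb{\Sigma}$; by Corollary \ref{corollary:Reeb_number_for_surfaces} this is equivalent to $\chi(\Sigma)\le 2-2\gamma$, with $\chi(\Sigma)$ even when $\Sigma$ is orientable. I would start from the construction underlying Corollary \ref{corollary:surface_from_theorem}, which realizes $\Gamma$ on $\Sigma_\gamma$ (on $\es^2$ when $\Gamma$ is a tree, and also on $S_{2\gamma}$ when $\gamma\ge1$) by cutting the surface into blocks $N_v$, $N_e$ with $\chi(N_v)=2-\deg(v)$, $\chi(N_e)=0$ and $\sum_v\chi(N_v)=2-2\gamma$, the Reeb graph of each block being a neighbourhood of the corresponding vertex or edge of $\Gamma$. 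The idea is to leave everything fixed except one block $N_{v_0}$ around a suitably chosen vertex $v_0$, replacing it by a block with the same boundary and the same Reeb graph but with a prescribed smaller Euler characteristic and, when needed, non-orientable; this lowers $\chi$ of the assembled surface to any target value $\le 2-2\gamma$ with the correct parity and orientability, while keeping $\reeb{f}\cong\Gamma$, so the conclusion follows from the classification of closed surfaces.

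If $\Gamma$ has a vertex $v_0$ of degree $\ge3$, then $v_0$ is not an extremum of $g$, so $\degin(v_0),\degout(v_0)\ge1$ and moreover $\degin(v_0)\ge2$ or $\degout(v_0)\ge2$. I would rebuild $N_{v_0}$ via Lemma \ref{lemma:realization_of_neighbourhood_of_vertex_with_spheres_on_the_boundary} with $k_-=\degin(v_0)$, $k_+=\degout(v_0)$ and parameter $t\ge0$, obtaining an orientable block with $\chi(N_{v_0})=2-\deg(v_0)-2t$ and Reeb graph still a neighbourhood of $v_0$; Remark \ref{remark:additional_non_orientable_1-handles} then allows attaching $r$ further orientation-reversing $1$-handles, giving $\chi(N_{v_0})=2-\deg(v_0)-2t-r$, non-orientable when $r\ge1$, again without changing the Reeb graph. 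Since $\chi(\Sigma)=2-2\gamma-2t-r$, taking $r=0$, $t=(2-2\gamma-\chi(\Sigma))/2$ handles every orientable $\Sigma$, taking $t=0$, $r=2-2\gamma-\chi(\Sigma)\ge1$ handles every non-orientable $\Sigma$ with $\chi(\Sigma)<2-2\gamma$, and the single borderline surface $S_{2\gamma}$ ($\gamma\ge1$) is provided directly by Corollary \ref{corollary:surface_from_theorem}. A good orientation forbids a pure cycle, so the only situation not covered this way is when $\Gamma$ is a path: then $\gamma=0$, every internal vertex has $\degin=\degout=1$, and Lemma \ref{lemma:realization_of_neighbourhood_of_vertex_with_spheres_on_the_boundary} forces $t\ge1$, which reaches every orientable $\Sigma_h$ (take $r=0$, $t=h$) but not the non-orientable surfaces in general (in particular it misses $\R P^2$ and the Klein bottle).

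I expect this path case to be the only genuine obstacle, and I would clear it by adding one more family of blocks over $\es^1\times[-\varepsilon,\varepsilon]$: attach $m\ge1$ $1$-handles along trivial, orientation-reversing embeddings $\es^0\times\de^1\to\es^1\times\{c\}$, all with feet on a single circle and interleaved so that the critical level set stays connected, and then raise the $m$ critical points to a common level using the rearrangement of \cite[Theorem 4.1.; Theorem 4.4.]{Milnor}. This yields a non-orientable block with two circle boundary components, $\chi=-m$, and Reeb graph a neighbourhood of a degree-two vertex. Inserting such a block at one internal vertex of the path and the cusp blocks $-x^2+y^3$ of Corollary \ref{corollary:surface_from_theorem} (of Euler characteristic $0$) at the remaining internal vertices realizes $\Gamma$ on $S_m$ for every $m\ge1$, which together with the orientable and spherical cases exhausts all closed surfaces with $\chi\le 2=2-2\gamma$. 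The technical point to check throughout is that bringing the added handles to one critical level leaves the critical level set connected, so that $\reeb{f}$ acquires no new vertex: for the orientation-reversing handles this is automatic, since one only glues arcs onto a connected circle, and for the blocks coming from Lemma \ref{lemma:realization_of_neighbourhood_of_vertex_with_spheres_on_the_boundary} it is exactly the connectivity already invoked in the proof of Theorem \ref{thm:graph_as_reeb_graph}.
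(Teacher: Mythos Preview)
Your argument is correct and follows the same strategy as the paper: start from the base surface of Corollary~\ref{corollary:surface_from_theorem}, pick one vertex of degree $\ge 2$ (which exists since $\Gamma\neq\Gamma_0$), and replace its block by one from Lemma~\ref{lemma:realization_of_neighbourhood_of_vertex_with_spheres_on_the_boundary}/Remark~\ref{remark:additional_non_orientable_1-handles} with the right Euler characteristic and orientability.

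The only real difference is organizational. The paper does not split off the path case: it simply applies Lemma~\ref{lemma:realization_of_neighbourhood_of_vertex_with_spheres_on_the_boundary} with $t=g-k\ge 1$ at \emph{any} vertex of degree $\ge 2$ in the orientable case, and Remark~\ref{remark:additional_non_orientable_1-handles} with $r=g-2k\ge 1$ at any such vertex in the non-orientable case. You treated Remark~\ref{remark:additional_non_orientable_1-handles} as if it inherited the constraint ``$k_-$ or $k_+\ge 2$ when $t=0$'' from Lemma~\ref{lemma:realization_of_neighbourhood_of_vertex_with_spheres_on_the_boundary}, which is why you needed a separate construction when every interior vertex has degree $2$. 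But that constraint is there only to guarantee at least one critical point; once $r\ge 1$ orientation-reversing handles are added, there are $k_-+k_+ +r-2\ge 1$ critical points regardless, so Remark~\ref{remark:additional_non_orientable_1-handles} already works at a degree-$2$ vertex with $k_-=k_+=1$. Your hand-built non-orientable block over $\es^1\times[-\varepsilon,\varepsilon]$ with $m$ reversing $1$-handles \emph{is} exactly that instance of Remark~\ref{remark:additional_non_orientable_1-handles}, so nothing is lost, but the extra case analysis (and the separate appeal to Corollary~\ref{corollary:surface_from_theorem} for $S_{2\gamma}$) can be dropped.
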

	
	\begin{proof}
		The only if part is straightforward. For the reverse implication let $k:= \beta_1(\Gamma) \leq \reeb{\Sigma}$. If $\Sigma = \Sigma_g$ and $g=k$ or if $\Sigma = S_g$ and $g=2k$, then the statement follows from Corollary \ref{corollary:surface_from_theorem}.
		
		If $\Sigma = \Sigma_g$ and $g>k$, we need to change  one surface $N_v$ at vertex $v$ of degree at least $2$ in the proof of Corollary \ref{corollary:surface_from_theorem}. Such a vertex exists, if we assume that $\Gamma \neq \Gamma_0$. In the construction of $N_v$ in Lemma \ref{lemma:realization_of_neighbourhood_of_vertex_with_spheres_on_the_boundary} let us set $t=g-k \geq 1$ and let $M$ be the resulting orientable surface with a function $f$ with finitely many critical points such that $\reeb{f} \cong \Gamma$. Then from calculations as in Corollary \ref{corollary:surface_from_theorem} we have $\chi(M) = 2 - 2k - 2t = 2- 2(k+t) = 2- 2g$, so $M\cong \Sigma_g$. 
		
		For the case when $\Sigma = S_g$ is a non-orientable surface and $g>2k$ we also need to change only one manifold $N_v$ at vertex $v$ of degree at least two by setting $r=g-2k \geq 1$ in Remark \ref{remark:additional_non_orientable_1-handles}. We obtain a non-orientable surface~$M$ with a~function $f$ with finitely many critical points such that $\reeb{f} \cong \Gamma$. As in Corollaries \ref{corollary:euler_char_for_N_v} and \ref{corollary:surface_from_theorem} we get $\chi(M) = 2 -2k-(g-2k) = 2-g$, so $M\cong S_g$.
		
		If $\Gamma= \Gamma_0 = \reeb{f}$, then $\Sigma \cong\es^2 $ by Reeb Theorem and the Reeb graph of the height function on $\es^2$ is $\Gamma_0$. 
	\end{proof}

	\begin{remark}
		The degenerate critical points of the function $f$ on $\Sigma$ in the above theorem come from the vertices of degree $2$ in $\Gamma$. Therefore $\Gamma$ can always be realized as the Reeb graph of a Morse function on $\Sigma$ but only up to homeomorphism of graphs.
	\end{remark}

	\begin{theorem}\label{theorem:realization_by_Morse_function}
		Let $\Gamma \neq \Gamma_0$ be a finite graph with good orientation, $\Delta_2$ be the number of vertices of degree $2$ in $\Gamma$ and let $\Sigma$ be a closed surface of genus $g$ (orientable or not). Then there exists a Morse function $f\colon \Sigma \rightarrow \R$ such that $\reeb{f}$ is isomorphic to $\Gamma$ if and only if 
	\begin{itemize}
		\item $g \geq \beta_1(\Gamma) + \Delta_2$, when $\Sigma$ is orientable,
		\item $g \geq 2\beta_1(\Gamma) + \Delta_2$, when $\Sigma$ is non-orientable.
		\end{itemize}
	\end{theorem}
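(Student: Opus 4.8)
\emph{Overview.} The plan is to prove necessity and sufficiency by different techniques: necessity by slicing $\Sigma$ along circles lying over the edges of $\Gamma$ and comparing Euler characteristics, and sufficiency by upgrading the realization from Corollary~\ref{corollary:surface_from_theorem}, which is only piecewise smooth at the degree-$2$ vertices, to a genuine Morse function carrying a controlled amount of genus.

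\emph{Necessity.} Suppose $f\colon\Sigma\to\R$ is a Morse function with $\reeb{f}\cong\Gamma$. Choose an interior point in each edge $e$ of $\Gamma$; its preimage component $C_e\subseteq\Sigma$ is a circle, and the $C_e$ are disjoint. Cutting $\Sigma$ along $\bigcup_e C_e$ gives, for every vertex $v$, a compact surface $P_v$ with $\deg(v)$ boundary circles, and $P_v$ is connected since it is $q^{-1}$ of the closed star of $v$, which retracts along the gradient flow onto the connected set $q^{-1}(v)$. As a circle has Euler characteristic $0$, $\chi(\Sigma)=\sum_v\chi(P_v)$. If $\deg(v)=1$ then $v$ is a local extremum of $\overline f$, $q^{-1}(v)$ is a single point (an index $0$ or $2$ critical point, the only critical point of $f$ at $v$), $P_v\cong\de^2$ and $\chi(P_v)=1$. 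If $\deg(v)\ge2$ then, since $\overline f$ induces a good orientation, $v$ is neither a source nor a sink, and every critical point of $f$ at $v$ has index $1$ (an index $0$ or $2$ point would again force $\deg(v)=1$); writing $P_v$ as a cobordism built from a collar of its lower boundary by attaching one $1$-handle per such critical point gives $\chi(P_v)=-m_v$, where $m_v\ge1$ is the number of critical points at $v$, while classifying $P_v$ as a surface with boundary gives $\chi(P_v)=2-\deg(v)-n_v$ with $n_v\ge0$, and with $n_v$ even when $P_v$ is orientable. Combining these with $\sum_v\deg(v)=2|E|$ and $\beta_1(\Gamma)=|E|-|V|+1$ yields
\[
\chi(\Sigma)=2-2\beta_1(\Gamma)-\sum_{\deg(v)\ge2}n_v.
\]
For a degree-$2$ vertex $n_v=m_v\ge1$, and when $\Sigma$ (hence $P_v$) is orientable the parity of $n_v$ forces $n_v\ge2$. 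Hence $\sum_{\deg(v)\ge2}n_v\ge\Delta_2$ in general and $\ge2\Delta_2$ in the orientable case, and substituting $\chi(\Sigma_g)=2-2g$ and $\chi(S_g)=2-g$ gives precisely $g\ge\beta_1(\Gamma)+\Delta_2$ and $g\ge2\beta_1(\Gamma)+\Delta_2$.

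\emph{Sufficiency.} Start from the surface and function realizing $\Gamma$ produced in Corollary~\ref{corollary:surface_from_theorem} (taking the orientable model on $\Sigma_{\beta_1(\Gamma)}$, or the non-orientable one on $S_{2\beta_1(\Gamma)}$ when $\Delta_2=0$ and $\beta_1(\Gamma)\ge1$, or $\es^2$ when $\Gamma$ is a reduced tree). There the only non-smoothness is one degenerate critical point at each degree-$2$ vertex, sitting inside a cylindrical piece $\es^1\times[-\varepsilon,\varepsilon]$ of Euler characteristic $0$. Replace each such piece by a Morse cobordism between the same two circles whose Reeb graph is still a single internal vertex of degree $2$: in the orientable case the twice-punctured torus from Lemma~\ref{lemma:realization_of_neighbourhood_of_vertex_with_spheres_on_the_boundary} with $n=2$, $k_-=k_+=1$, $t=1$ (Euler characteristic $-2$, two index-$1$ critical points on one level, arguing as in the proof of Theorem~\ref{thm:graph_as_reeb_graph} that its Reeb graph is edge--vertex--edge); in the non-orientable case the cylinder with one non-orientation-preserving $1$-handle attached along its middle circle (Euler characteristic $-1$, one index-$1$ critical point, degree-$2$ vertex by Proposition~\ref{proposition:degree_and_index}). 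The Reeb graph is unaffected, so it is still isomorphic to $\Gamma$, and the ambient Euler characteristic drops by $2$ (resp.\ $1$) per degree-$2$ vertex, landing on $\Sigma_{\beta_1(\Gamma)+\Delta_2}$ (resp.\ $S_{2\beta_1(\Gamma)+\Delta_2}$), the smallest genus allowed. To hit a larger genus, attach at a single vertex of degree $\ge2$ --- one exists because $\Gamma\ne\Gamma_0$ --- extra trivial orientation-compatible $1$-handles (orientable case; each drops $\chi$ by $2$, cf.\ Corollary~\ref{corollary:euler_char_for_N_v}) or extra non-orientation-preserving $1$-handles (non-orientable case; each drops $\chi$ by $1$, cf.\ Remark~\ref{remark:additional_non_orientable_1-handles}), exactly as in the proof of Theorem~\ref{thm:reeb_graphs_of_surfaces}; neither changes the Reeb graph, so every permitted genus is attained.

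\emph{Main obstacle.} The delicate point is the necessity direction, and within it the claim that each degree-$2$ vertex forces at least one full handle: one must check that such a vertex really contains a critical point (it does, being a vertex of $\reeb{f}$ on a closed surface), that this critical point has index $1$, and hence that the cut piece $P_v$ is not an annulus, so that $\chi(P_v)\le-1$ --- and $\le-2$ when $\Sigma$ is orientable, because an orientable surface with two boundary circles and negative Euler characteristic satisfies $\chi\le-2$. The connectedness of the pieces $P_v$, the Euler-characteristic bookkeeping, and the handle surgeries in the sufficiency part are routine given Lemma~\ref{lemma:realization_of_neighbourhood_of_vertex_with_spheres_on_the_boundary}, Proposition~\ref{proposition:degree_and_index}, Corollary~\ref{corollary:surface_from_theorem}, Corollary~\ref{corollary:euler_char_for_N_v} and Remark~\ref{remark:additional_non_orientable_1-handles}.
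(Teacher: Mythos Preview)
Your proof is correct and follows essentially the same route as the paper: cut $\Sigma$ into vertex-pieces $P_v$, bound each $\chi(P_v)\le 2-\deg(v)$ with a further drop at degree-$2$ vertices, and for sufficiency modify the Corollary~\ref{corollary:surface_from_theorem} construction by replacing the degenerate degree-$2$ cylinders with the Morse cobordisms of Lemma~\ref{lemma:realization_of_neighbourhood_of_vertex_with_spheres_on_the_boundary} (orientable, $t=1$) or Remark~\ref{remark:additional_non_orientable_1-handles} (non-orientable, $r=1$). The only differences are cosmetic: you obtain $\chi(P_w)\le -1$ at a degree-$2$ vertex directly from the handle count $\chi(P_w)=-m_w$, whereas the paper argues by contradiction that $P_w$ cannot be an annulus; and you absorb any excess genus at an arbitrary vertex of degree $\ge 2$ (as in the proof of Theorem~\ref{thm:reeb_graphs_of_surfaces}), while the paper places it at one of the degree-$2$ vertices by enlarging $t$ or $r$ there.
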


	\begin{proof}
		Let $f$ be as in the statement. Divide $\Sigma$ into surfaces $N_v$ which correspond to small, closed neighbourhoods of vertices $v\in V$ in $\Gamma$ and into surfaces $N_e \cong \es^1 \times [0,1]$ corresponding to edges $e$ in $\Gamma$. As in Corollary \ref{corollary:surface_from_theorem}
		we have $\chi(\Sigma) = \sum_{v\in V} \chi(N_v)$. Obviously, $\chi(N_v) \leq 2-\deg(v)$, where the equality holds when $N_v$ is the sphere with $\deg(v)$ open discs removed. We show that it is not the case for vertices of degree $2$.
		
		Let $w$ be a vertex of degree $2$ in $\Gamma$ and suppose $N_v$ is the sphere with two holes. The function $f_w = f|_{N_w}$ is a Morse function with at least one critical point of index $1$. This leads (after attaching two discs ($0$ and $2$ handles) to $N_w$) to a Morse function on the sphere with $k_1 \geq 1$ critical points of index $1$ and exactly two extrema. Thus $2=\chi(\es^2) = 2 - k_1 \leq 1$, a contradiction. Therefore $\chi(N_w) \leq 2 - \deg(w) -1 = -1$.
		
		If $\Sigma$ is non-orientable, then just as in Corollary \ref{corollary:surface_from_theorem} we have $$2-g = \sum_{v\in V} \chi(N_v) \leq \sum_{v\in V} (2-\deg(v)) -\Delta_2 = 2 - 2\beta_1(\Gamma) - \Delta_2,$$ hence $g \geq 2\beta_1(\Gamma) +\Delta_2$. If $\Sigma$ is orientable, then so is $N_w$ and $\chi(N_w) \leq -2$, since the Euler characteristic of a compact orientable surface with two boundary components is even. Hence $2-2g \leq \sum_{v\in V} (2-\deg(v)) -2\Delta_2 = 2 -2\beta_1(\Gamma) - 2\Delta_2$, so $g \geq \beta_1(\Gamma) + \Delta_2$.
		
		For the reverse implication, if $\Delta_2=0$, then $\reeb{\Sigma} \geq \beta_1(\Gamma)$ and the realization by a Morse function follows from Theorem \ref{thm:reeb_graphs_of_surfaces} by the above remark. Assume that $\Delta_2 \geq 1$. We need to change in Corollary \ref{corollary:surface_from_theorem} the surfaces $N_w$ for which $\deg(w) =2$.
		
		If $\Sigma$ is orientable, then we take $N_w$ to be from Lemma \ref{lemma:realization_of_neighbourhood_of_vertex_with_spheres_on_the_boundary} for $n=2$, $k_\pm =1$ and $t=1$ for all such vertices $w$ except one, for which we set $t=t_0 := g-\beta_1(\Gamma)-\Delta_2+1 \geq 1$. Since $\chi(N_w) = 2 - \deg(w) - 2t$ by Corollary \ref{corollary:euler_char_for_N_v}, the resulting closed orientable surface $M$ has $\chi(M) = 2-2\beta_1(\Gamma)-2(\Delta_2-1) -2t_0 = 2-2g$, so $M\cong \Sigma$.

		Similarily, when $\Sigma$ is non-orientable, we change $N_w$ by a surface from Remark \ref{remark:additional_non_orientable_1-handles} for $r=1$, but for one such a vertex $w$ we set $r = r_0 := g-2\beta_1(\Gamma) -\Delta_2+1 \geq 1$. Then $\chi(N_w) = 2-\deg(w) -r$. In the result we obtain a closed non-orientable surface $M$ with $\chi(M) = 2-2\beta_1(\Gamma)-(\Delta_2-1) -r_0 = 2-g$, so $M\cong \Sigma$.
	\end{proof}

	\begin{remark}
		Martinez-Alfaro et al. \cite{Morse-Bott} showed a realization theorem for simple Morse--Bott functions on orientable surfaces. They also proved that two simple Morse--Bott functions (in particular simple Morse functions) on an orientable surface are  conjugated if and only if their Reeb graphs are isomorphic and the order of vertices induced by these functions is preserved.
	\end{remark}


	\begin{remark} \label{remark:not_good_oriented_graph}
		These constructions can be extended to graphs
		with a (not necessarily good) orientation, but without oriented cycles. Such a graph $\Gamma$ may contain a~vertex $v$ of degree at least two such that $\degin(v)=0$ ($\degout(v) =0$ respectively). For such a vertex Masumoto--Saeki in \cite[Theorem 2.1. Case (c)]{Saeki} construct a surface $N_v$  and a~function $f_v\colon N_v \to \R$ such that
		\begin{itemize}	
			\setlength\itemsep{0.05em}
			\item $\chi(N_v) = 2 - \deg(v)$,
			\item $f_v$  has infinitely many critical points, but only one critical value $a$ and $f_v^{-1}(a)$ is connected,
			\item the range of $f_v$ is $[a,a+\varepsilon]$  ($[a-\varepsilon,a]$, resp.),
			\item $f_v^{-1}(a+\varepsilon)$ ($f_v^{-1}(a-\varepsilon)$, resp.) is the disjoint union of $\deg(v)$ copies of $\es^1$,
			\item $\reeb{f_v}$ is homeomorphic to a small neighbourhood of $v$ in $\Gamma$.
		\end{itemize} 
	\end{remark}
		
		\begin{proposition}
			Let $\Sigma$ be a closed surface and $\Gamma$ be an oriented graph without oriented cycles such that $\beta_1(\Gamma) \leq \reeb{\Sigma}$. Then there exists $f\colon \Sigma \to \R$ with finitely many critical values such that $\reeb{f}$ is orientation-preserving homeomorphic to~$\Gamma$.
			
			If $\Gamma$ has a vertex $w$ such that both $\degin(w)$ and $\degout(w)$ are nonzero, then $\reeb{f}$ and $\Gamma$ are in fact isomorphic.
		\end{proposition}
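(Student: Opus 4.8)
The plan is to imitate the gluing construction from the proofs of Theorem~\ref{thm:graph_as_reeb_graph} and Corollary~\ref{corollary:surface_from_theorem}, inserting at every source and sink of degree at least two the surface--function pair of Masumoto--Saeki recalled in Remark~\ref{remark:not_good_oriented_graph}. As in Theorem~\ref{thm:reeb_graphs_of_surfaces} I treat $\Gamma=\Gamma_0$ separately: then $\Sigma\cong\es^2$ by Reeb's theorem (Remark~\ref{remark:Reeb_Theorem}) and the height function realizes $\Gamma_0$, so below I assume $\Gamma\neq\Gamma_0$. Since $\Gamma$ has no oriented cycle, the acyclic orientation admits a linear extension, so there is a continuous $g\colon\Gamma\to\R$ strictly increasing along each edge in the direction of the orientation; its local extrema are attained exactly at the sources and sinks, which may now have degree $\ge2$. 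For a vertex $v$ I build $N_v$ together with a function $f_v$ onto a small interval about $g(v)$ as follows: exactly as in Corollary~\ref{corollary:surface_from_theorem} (a disc, the Lemma~\ref{lemma:realization_of_neighbourhood_of_vertex_with_spheres_on_the_boundary} block with $t=0$, or the degenerate $\es^1\times[0,1]$ model) when $\deg(v)=1$ or when $\degin(v)\ge1$ and $\degout(v)\ge1$; and as the Masumoto--Saeki block of Remark~\ref{remark:not_good_oriented_graph} when $v$ is a source or a sink of degree $\ge2$. Edges become cylinders $\es^1\times[0,1]$ as before. Gluing the $N_v$ along the cylinders $N_e$ over the boundary circles and piecing the $f_v$, $f_e$ together yields a closed surface $\Sigma'$ and a function $f$ with finitely many critical values; since each $\reeb{f_v}$ is homeomorphic, respecting $g$, to a neighbourhood of $v$ in $\Gamma$, the Reeb graph $\reeb{f}$ is orientation-preservingly homeomorphic to $\Gamma$.

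It then remains to arrange $\Sigma'\cong\Sigma$. Each block above has $\chi(N_v)=2-\deg(v)$ --- for the disc this is $1=2-1$, for the Lemma and degenerate models it is Corollary~\ref{corollary:euler_char_for_N_v} with $t=0$, and for the Masumoto--Saeki block it is among the listed properties in Remark~\ref{remark:not_good_oriented_graph} --- so, exactly as in Corollary~\ref{corollary:surface_from_theorem}, $\chi(\Sigma')=\sum_{v}(2-\deg(v))=2|V|-2|E|=2-2\beta_1(\Gamma)$, and by choosing how to glue the cylinders over the edges outside a spanning tree I can take $\Sigma'$ orientable, or non-orientable when $\beta_1(\Gamma)\ge1$; thus $\Sigma'$ can already be $\Sigma_{\beta_1(\Gamma)}$ or $S_{2\beta_1(\Gamma)}$. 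For a general $\Sigma$ the hypothesis $\beta_1(\Gamma)\le\reeb{\Sigma}$ means, by Corollary~\ref{corollary:Reeb_number_for_surfaces}, $g\ge\beta_1(\Gamma)$ in the orientable case and $g\ge2\beta_1(\Gamma)$ in the non-orientable case, and the remaining genus is absorbed as in Theorem~\ref{thm:reeb_graphs_of_surfaces}: at one vertex of degree $\ge2$ --- one exists because $\Gamma\neq\Gamma_0$ --- I enlarge its block by taking $t=g-\beta_1(\Gamma)$ extra handle pairs in Lemma~\ref{lemma:realization_of_neighbourhood_of_vertex_with_spheres_on_the_boundary}, respectively $r=g-2\beta_1(\Gamma)$ extra non-orientable $1$-handles as in Remark~\ref{remark:additional_non_orientable_1-handles}. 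If the only vertices of degree $\ge2$ are sources and sinks, I instead subdivide one edge by a new degree-$2$ vertex and perform the enlargement on its cylinder block; this changes $\Gamma$ only within its homeomorphism type. The Euler-characteristic bookkeeping is identical to that in Corollaries~\ref{corollary:euler_char_for_N_v} and~\ref{corollary:surface_from_theorem} and gives $\chi(\Sigma')=\chi(\Sigma)$, whence $\Sigma'\cong\Sigma$ by the classification of closed surfaces.

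For the final assertion, assume $\Gamma$ has a vertex $w$ with $\degin(w)\ge1$ and $\degout(w)\ge1$. Then all the genus-adjusting handle attachments above can be carried out at $w$ itself, using the blocks of Lemma~\ref{lemma:realization_of_neighbourhood_of_vertex_with_spheres_on_the_boundary} or Remark~\ref{remark:additional_non_orientable_1-handles}: these require precisely $k_-=\degin(w)\ge1$ and $k_+=\degout(w)\ge1$, and each such block has a single interior vertex whose in- and out-degrees equal those of $w$. Equipping, in addition, every degree-$2$ vertex of $\Gamma$ with an honest (possibly degenerate) critical point so that no edge is ever subdivided, the resulting $\reeb{f}$ is isomorphic --- not merely homeomorphic --- to $\Gamma$.

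The delicate point is the asymmetry between ``mixed'' interior vertices and sources/sinks of degree $\ge2$: the Masumoto--Saeki block realizes a source or sink faithfully but only up to homeomorphism, and it carries the fixed Euler characteristic $2-\deg(v)$ with no handle-count parameter to absorb extra genus. Hence, when $\Gamma$ has no vertex with both in- and out-degree positive, the genus adjustment forces the introduction of an auxiliary degree-$2$ vertex, giving only a homeomorphism --- which is exactly why the isomorphism statement needs the extra hypothesis. Apart from this, the routine checks are that attaching the Masumoto--Saeki pieces (finitely many critical values but infinitely many critical points) keeps the global number of critical values finite, and that all gluings are compatible with the induced orientations so that $\reeb{f}$ comes out orientation-preservingly homeomorphic (respectively isomorphic) to $\Gamma$; both follow at once from the local models.
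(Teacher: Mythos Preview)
Your argument is essentially the paper's own proof, carried out in greater detail: subdivide (if necessary) to create a vertex with both in- and out-degree positive, use the Masumoto--Saeki block at the remaining sources and sinks of degree $\ge 2$, and otherwise run the construction of Theorem~\ref{thm:reeb_graphs_of_surfaces}. The Euler-characteristic bookkeeping and the mechanism for absorbing extra genus are exactly as in the paper.

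There is one slip. Your separate treatment of $\Gamma=\Gamma_0$ invokes Reeb's theorem to conclude $\Sigma\cong\es^2$, but that implication runs the wrong way here: Reeb's theorem (Remark~\ref{remark:Reeb_Theorem}) concerns functions with two \emph{critical points}, hence Reeb graph \emph{isomorphic} to $\Gamma_0$, whereas the Proposition only asks for $\reeb{f}$ \emph{homeomorphic} to $\Gamma_0$. For any $\Sigma$ with $\reeb{\Sigma}\ge 0=\beta_1(\Gamma_0)$ the conclusion must still hold, and indeed it does---your own subdivision trick produces a degree-$2$ mixed vertex on which the genus adjustment can be performed, yielding $\reeb{f}$ homeomorphic (though not isomorphic) to $\Gamma_0$ on any $\Sigma$. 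The paper sidesteps the issue by subdividing at the very outset, before any case distinction; if you simply drop the special treatment of $\Gamma_0$ and let the subdivision clause cover the case ``no vertex of degree $\ge 2$'' as well as ``all such vertices are sources or sinks'', your proof is complete.
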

	
	\begin{proof}
		Adding a new vertex in the centre of an edge of $\Gamma$ gives the homeomorphic graph, so we may assume that $\Gamma$ contains a vertex $w$ such that both $\degin(w)$ and $\degout(w)$ are nonzero. The orientation of $\Gamma$ can be induced by a continuous function which is strictly monotonic on the edges (cf. \cite[Theorem 3.1.]{Sharko}). Thus the construction is the same as in the proof of Theorem \ref{thm:reeb_graphs_of_surfaces} with the exception of vertices $v$ of degree at least two such that $\degin(v)=0$ or $\degout(v) =0$ for which we use the surface and function from Remark \ref{remark:not_good_oriented_graph}.
	\end{proof}

		

	\paragraph{Acknowledgements.} The author would like to thank Prof. Wac\l{}aw Marzantowicz for suggesting the problem and for many stimulating conversations. The author expresses his gratitude to Marek Kaluba and Zbigniew B\l{}aszczyk for useful suggestions and help in editing the text of the work and he also wishes to acknowledge the anonymous reviewer for his/her detailed and helpful comments to the manuscript.

\address
	

\begin{thebibliography}{10}
		\labelsep=1em\relax
		\footnotesize
		
		\bibitem{applications}
		{\sc S. Biasotti, D. Giorgi, M. Spagnuolo and B. Falcidieno},
		{\em Reeb graphs for shape analysis and applications}, 
		Theoret. Comput. Sci. 392 (2008), 5--22.
		
		
		\bibitem{Edelsbrunner}
		{\sc K. Cole-McLaughlin, H. Edelsbrunner, J. Harer, V. Natarajan and V. Pascucci},
		{\em Loops in Reeb graphs of 2-manifolds}, 
		Discrete Comput. Geom. 32 (2004), 231--244.		
		
		
		
		\bibitem{Gelbukh:Reeb_graph}
		{\sc I. Gelbukh},
		{\em Loops in Reeb graphs of $n$-manifolds}, 
		Discrete Comput. Geom. 59 (2018), no. 4, 843--863.
		
		\bibitem{Gelbukh:corank}
		{\sc I. Gelbukh},
		{\em The co-rank of the fundamental group: The direct product, the first Betti number, and the topology of foliations}, 
		Math. Slovaca 67 (2017), 645--656.
		
		\bibitem{KMS}
		{\sc M. Kaluba, W. Marzantowicz and N. Silva},
		{\em On representation of the Reeb graph as a sub-complex of manifold}, 
		Topol. Methods Nonlinear Anal. 45 (2015), no. 1, 287--307.
		
		
		
		\bibitem{Kervaire}
		{\sc M.\,A. Kervaire and J. W. Milnor},
		{\em Groups of Homotopy Spheres: I}, 
		Ann. of Math. (2), vol. 77, no. 3. (May, 1963), 504--537.
	
		
		\bibitem{Morse-Bott}
		{\sc J. Martinez-Alfaro, I.\,S. Meza-Sarmiento and R. Oliveira},
		{\em Topological classification of simple Morse Bott functions on surfaces}, 
		Contemp. Math. 675 (2016), 165--179.

		
		\bibitem{Saeki}
		{\sc Y. Masumoto and  O. Saeki },
		{\em A smooth function on a manifold with given Reeb graph}, 
		Kyushu J. Math. 65 (2011), no. 1, 75--84. 
		
		
		
		\bibitem{Milnor:Reeb}
		{\sc J.\,W. Milnor},
		{\em Differential Topology},
		Lectures on {M}odern {M}athematics, {V}ol. {II}, Wiley, New York, 1964, 165--183.
		
		
		\bibitem{Milnor}
		{\sc J.\,W. Milnor},
		{\em Lectures on the h-Cobordism Theorem}, 
		Princeton University Press, 1965.
		

		\bibitem{Reeb}
		{\sc G. Reeb},
		{\em Sur les points singuliers d'une forme de Pfaff compl{\`e}tement int{\'e}grable ou d'une fonction num{\'e}rique}, 
		C. R. Acad. Sci. Paris 222  (1946), 847--849.
		
		
		
		\bibitem{Sharko}
		{\sc V.\,V. Sharko},
		{\em About Kronrod--Reeb graph of a function on a manifold}, 
		Methods Funct. Anal. Topology 12 (2006), 389--396.
		
		
		
		\bibitem{Takens}
		{\sc F. Takens},
		{\em The minimal number of critical points of a function on a compact manifold and the Lusternik--Schnirelman category}, 
		Invent. Math. 6 (1968), 197--244.
		
	\end{thebibliography}
\end{document}